\documentclass{amsart}
\usepackage{a4wide}
\usepackage[initials]{amsrefs}
\usepackage{url}
\usepackage[utf8]{inputenc}
\usepackage{amsmath,amssymb,amsthm,
graphicx,psfrag, epsfig, caption, float, color,multirow}
\usepackage{enumerate}
\usepackage{nicefrac}
\usepackage{multicol}

\newtheorem{theorem}{Theorem}
\newtheorem{lemma}{Lemma}
\newtheorem{corollary}{Corollary}

\newtheorem{definition}{Definition}
\newtheorem{conjecture}{Conjecture}

\newenvironment{customlem}[1]
{\innercustomlem}
  {\endinnercustomlem}

\theoremstyle{remark}
\newtheorem{remark}{Remark}
\setlength{\columnsep}{-80pt}

\title[Sums of four and more unit fractions
  and approximate parametrizations]{Sums of four and more unit fractions\\
  and approximate parametrizations}

\author[C. Elsholtz]{Christian Elsholtz}
\address[C. Elsholtz]{
Graz University of Technology, Institute of Analysis and Number Theory, Kopernikusgasse 24/II, 8010 Graz, Austria}
\email{elsholtz@math.tugraz.at}

\author[S. Planitzer]{Stefan Planitzer}
\address[S. Planitzer]{}
%University of Graz, Institute of Mathematics and Scientific Computing, Heinrichstra\ss e 36/IV, 8010 Graz, Austria}
\email{stefan.planitzer@gmail.com}

\subjclass[2020]{Primary: 11D68, % rational numbers and sums of fractions
                 Secondary: 11D72. % equations in many variables
}

\keywords{unit fractions, Erd\H{o}s-Straus equation, Diophantine equations}

\begin{document}

\begin{abstract}
We prove new upper bounds on the number of representations of rational numbers $\frac{m}{n}$ as a sum of $4$ unit fractions,
giving five different regions, depending on the size of $m$ in terms of $n$. 
In particular, we improve the most relevant cases, 
when $m$ is small, and when $m$ is close to $n$.
The improvements stem from not only studying complete
parametrizations of the set of solutions, but simplifying this set appropriately. 
Certain subsets of all parameters define the set of all solutions, 
up to applications of divisor functions, which has little impact on the upper bound of the number of solutions.  
These ``approximate parametrizations'' were the key point to enable computer programmes to filter through 
large number of equations and inequalities. Furthermore, this result leads to new upper bounds for the number of representations 
of rational numbers as sums of more than $4$ unit fractions.
\end{abstract}

%\thanks{The authors would like to thank the referee for comments on the
%  manuscript and
%  acknowledge the support of the 
%Austrian Science Fund (FWF): W1230 and I 4945-N}

\subjclass[2010]{Primary: 11D68, % rational numbers and sums of fractions
                 Secondary: 11D72 % equations in many variables
}

\maketitle

\section{Introduction}

We consider the problem of representing an arbitrary positive rational number $\frac{m}{n}$ as a sum of $k$ unit fractions. This leads to Diophantine equations of the form
\begin{equation} \label{eq:generalUFeq}
\frac{m}{n}=\sum_{i=1}^k\frac{1}{a_i}.
\end{equation}
This equation has been studied from a variety of different view points,
we only mention results of Croot \cite{Croot}, Graham \cite{Graham},
Konyagin \cite{DoubleExponential}
and Martin \cite{DenseEgyptian}.

In this paper
we are interested in upper bounds for the number of solutions 
of~\eqref{eq:generalUFeq} in $(a_1, \ldots, a_k) \in \mathbb{N}^k$, 
in particular for fixed $m,n,k \in \mathbb{N}$,
where we consider the $a_i$ to be given in increasing order.

The most important special case of equation~\eqref{eq:generalUFeq} is when $m=4$ and $k=3$ which is linked to the famous Erd\H{o}s-Straus conjecture. This conjecture states that for any $n \geq 2$, the rational number $\frac{4}{n}$ has a representation as a sum of three unit fractions (see~\cite{Erdoes}). 
 
For a survey of recent results and for later use we borrow the following notation from~\cite{BrowningElsholtz}:
$$f_k(m,n)=\bigg|\bigg\{(a_1, \ldots, a_k) \in \mathbb{N}^k:a_1 \leq \ldots \leq a_k, \frac{m}{n}=\sum_{i=1}^k\frac{1}{a_i}\bigg\}\bigg|.$$
In case of the Erd\H{o}s-Straus equation with $n=p$ prime, 
Elsholtz and Tao~\cite{ElsholtzTao} proved that
\begin{equation} \label{eq:ET3Bound}
f_3(4,p) \ll_{\varepsilon} p^{\nicefrac{3}{5}+\varepsilon}.
\end{equation}
For general $m,n \in \mathbb{N}$ we have that
\begin{equation}\label{eq:BE3Bound}
f_3(m,n) \ll_{\varepsilon} n^{\varepsilon}\bigg(\frac{n}{m}\bigg)^{\nicefrac{2}{3}} \qquad \text{(Browning and Elsholtz~\cite{BrowningElsholtz})}
\end{equation}
and
\begin{equation}\label{eq:EP3Bound}
f_3(m,n) \ll_{\varepsilon} n^{\varepsilon} \bigg(\frac{n^3}{m^2}\bigg)^{\nicefrac{1}{5}} \qquad \text{(Elsholtz and Planitzer~\cite{ElsholtzPlanitzer}).} 
\end{equation}

Note that the upper bound in~\eqref{eq:EP3Bound} is stronger
than~\eqref{eq:BE3Bound} if $m\ll n^{\nicefrac{1}{4}}$. In particular the bound
in~\eqref{eq:EP3Bound} allows to deduce the Elsholtz-Tao exponent $3/5$ in~\eqref{eq:ET3Bound} for the Erd\H{o}s-Straus equation also for general denominators $n$.

Concerning sums of more than $3$ unit fractions the following upper bounds were proved in~\cite{BrowningElsholtz}):
for any $\varepsilon>0$
\begin{equation}\label{eq:BEkgeq4Bounds}
f_4(m,n)\ll_\varepsilon n^\varepsilon \Big\{\Big(\frac{n}{m}\Big)^{\frac{5}{3}}+
\frac{n^{\frac{4}{3}}}{m^{\frac{2}{3}}}\Big\},
\end{equation}
and for $k\geq 5$
\begin{equation}
f_k(m,n)\ll_\varepsilon  (kn)^\varepsilon
\Big(
\frac{k^{\frac{4}{3}}n^2}{m}\Big)^{(\nicefrac{5}{3})\cdot 2^{k-5}}.
\end{equation}

This was improved in~\cite{ElsholtzPlanitzer}:
\begin{equation}\label{eq:EPkgeq4Bounds}
f_4(m,n) \ll_{\varepsilon}
n^{\varepsilon}\bigg(\frac{n^{\nicefrac{4}{3}}}{m^{\nicefrac{2}{3}}}+\frac{n^{\nicefrac{28}{17}}}{m^{\nicefrac{8}{5}}}\bigg)
\end{equation}
and
\begin{equation}
f_k(m,n) \ll_{\varepsilon} (kn)^{\varepsilon}\bigg(\frac{k^{\nicefrac{4}{3}}n^2}{m}\bigg)^{(\nicefrac{28}{17})\cdot 2^{k-5}}, \qquad \text{for } k \geq 5.
\end{equation}

In case of $k=3$ the bounds in~(\ref{eq:ET3Bound}--\ref{eq:EP3Bound}) were
derived by analyzing suitable parametrizations of solutions of
equation~\eqref{eq:generalUFeq} together with an application of the classical
divisor bound. 
The method of Elsholtz and Tao~\cite{ElsholtzTao} leading 
to~\eqref{eq:ET3Bound} is possibly the limit of that method, and the same seems
to be true for the bound in~\eqref{eq:EP3Bound} (at least for constant $m$). 
However, we believe that these bounds are still quite far from the truth. 
Indeed, it was suggested by Heath-Brown to Elsholtz that even 
$f_3(m,n)=\mathcal{O}_{\varepsilon}(n^{\varepsilon})$ appears possible, as $n$ tends to infinity. 
More generally, and somewhat stronger, we think that it is also quite possible that the following conjecture holds true.

\begin{conjecture}
For $k,m$ fixed and $n \rightarrow \infty$ we have
$$f_k(m,n) \ll \exp\bigg(C_{m,k}\frac{\log n}{\log \log n}\bigg),$$
for a positive constant $C_{m,k}$ depending only on $m$ and $k$.
\end{conjecture}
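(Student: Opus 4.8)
The plan is to attack the conjecture by induction on $k$, using the complete and approximate parametrizations of this paper as the engine. The base cases $k\le 2$ are elementary and already have exactly the right shape, and moreover uniformly in $m$: after reducing to $\gcd(m,n)=1$ one writes a solution of $\frac{m}{n}=\frac1a+\frac1b$ as $a+b=mt$, $ab=nt$, so that $t=a^2/(am-n)$ is determined by $a$ with $(am-n)\mid a^2$; putting $\delta:=\gcd(a,n)$ (a divisor of $n$) and $a=\delta a'$ (whence $\gcd(a',n/\delta)=1$) forces $a'm-n/\delta$ to be a positive divisor of $\delta$, so $a$ is determined by a pair $(\delta,c)$ with $\delta\mid n$ and $c\mid\delta$, giving $f_2(m,n)\le\sum_{\delta\mid n}d(\delta)\le d(n)^2\ll\exp(O(\log n/\log\log n))$.

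For the inductive step one would peel off the smallest denominator. In a solution with $a_1\le\cdots\le a_k$ one has $\frac nm\le a_1\le\frac{kn}{m}$, and for each such $a_1$ the residual equation $\frac{ma_1-n}{na_1}=\sum_{i=2}^k\frac1{a_i}$ is a $(k-1)$-fold instance with denominator at most $na_1\le kn^2/m$. Since $\log(kn^2/m)/\log\log(kn^2/m)\le(2+o(1))\log n/\log\log n$, an inductive hypothesis of the shape $f_j(m',n')\ll\exp(C_j\log n'/\log\log n')$ \emph{uniform in the numerator} $m'$ --- a strengthening which the $k\le 2$ case already supports --- would bound the contribution of each fixed $a_1$ correctly. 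The whole difficulty then lies in the count of admissible $a_1$: the window $[\tfrac nm,\tfrac{kn}m]$ contains $\asymp n/m$ integers, and one must show that only $\exp(O(\log n/\log\log n))$ of them extend to a solution. Establishing such sparsity of ``extendable leading denominators'' is, however, essentially as hard as the conjecture itself, so naive peeling does not on its own make progress.

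The substantive line of attack is therefore through the complete parametrizations, pushed further than in the present paper. Every solution of $\frac{m}{n}=\sum_{i=1}^k\frac1{a_i}$ can be encoded by a bounded number of auxiliary parameters linked by a web of divisibility conditions into quantities polynomially bounded in $n$. If one could arrange a parametrization that is \emph{purely multiplicative} --- each parameter a genuine divisor of a $\mathrm{poly}(n)$ quantity, with only finitely many such conditions --- then the solution count would be at most a bounded product of values $d(\mathrm{poly}(n))\ll\exp(O(\log n/\log\log n))$, exactly the conjectured order. The approximate parametrizations introduced here reduce the number of parameters and simplify the divisibility web; carrying this simplification to its logical conclusion is the natural programme.

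The main obstacle is that in every known parametrization at least one parameter survives that ranges over an \emph{interval} of length $\asymp n^{c}$ with $c>0$, constrained only by a single congruence of the type $uv\equiv w\pmod q$ (or a single conic), and for such residual equations one presently cannot beat $\ll n^{c'}$ for some $c'>0$; this is exactly the source of the exponents $\tfrac35,\tfrac23,\tfrac53,\tfrac{28}{17},\dots$ occurring in the bounds~\eqref{eq:ET3Bound}--\eqref{eq:EPkgeq4Bounds}. In full generality such a congruence genuinely has $\gg n^{\eta}$ solutions for some $\eta>0$ (take $q$ highly composite and $w$ adversarial), so eliminating it requires exploiting extra arithmetic structure of $q$ and $w$ inherited from the original equation, and I see no way to do this at present; I expect it to be the heart of the matter. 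A realistic first target is the single case $k=3$, $m=4$, $n=p$ prime --- i.e.\ breaking the $3/5$ barrier of Elsholtz--Tao --- and a \emph{conditional} result, assuming a suitable bound for the number of solutions of the residual congruences or proving the conjecture on average over $n$, already looks both worthwhile and plausibly within reach.
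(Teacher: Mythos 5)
The statement you are trying to prove is a \emph{Conjecture} in the paper: the authors offer no proof of it, and it is open. So there is no argument of theirs to compare yours against, and your proposal --- which candidly stops short of a proof --- should be judged on its own terms. What you actually establish is correct: the $k=2$ computation ($t=a^2/(am-n)$, $\delta=\gcd(a,n)$, $a'm-n/\delta$ a divisor of $\delta$, hence $f_2(m,n)\le\sum_{\delta\mid n}d(\delta)\le d(n)^2$) is sound and does give the conjectured shape uniformly in $m$; and the reduction of the inductive step to counting ``extendable'' values of $a_1$ in the window $(\nicefrac{n}{m},\nicefrac{kn}{m}]$ is an accurate description of where naive peeling dies. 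But that is precisely where the proof ends: you have no bound on the number of extendable $a_1$ beyond the trivial $\asymp n/m$, and you say yourself that obtaining one is ``essentially as hard as the conjecture itself.'' Likewise, the parametrization route founders, as you note, on a surviving parameter ranging over an interval of length $n^{c}$ subject only to a congruence or conic condition; every bound in the paper and its predecessors (\eqref{eq:ET3Bound}--\eqref{eq:EPkgeq4Bounds}, Theorem~\ref{thm:4UFThm}) is a power of $n$ for exactly this reason, and nothing in the present machinery gets below $n^{c'}$ for some fixed $c'>0$, let alone to $\exp(O(\log n/\log\log n))$.

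So the verdict is: not a proof, and no single missing lemma would repair it --- the gap is the theorem itself. Two smaller points. First, your inductive framework requires the hypothesis to be uniform in the numerator $m'$, where $m'=ma_1-n$ can be as large as $(k-1)n$; the conjecture as stated fixes $m$, so even granting sparsity of $a_1$ you would be invoking a strictly stronger statement than the one you are inducting on (your $k\le2$ base case does support the uniform version, but for $k\ge3$ nothing is known of that strength). Second, your suggested fallback targets (breaking the $3/5$ exponent for $f_3(4,p)$, a conditional result, or an average-over-$n$ result) are reasonable research directions but are disjoint from the task of proving this conjecture. If your write-up is meant as a proof, it has a fatal gap; if it is meant as a problem analysis, it is an accurate one.
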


The bounds in \eqref{eq:EPkgeq4Bounds} were derived via an application of a lifting procedure 
first introduced by Browning and Elsholtz~\cite{BrowningElsholtz}. The improvement in the bounds 
in~\eqref{eq:EPkgeq4Bounds} compared to the original bounds by Browning and Elsholtz comes 
from taking into account a small part of the information coming from parametrizations of solutions of~\eqref{eq:generalUFeq} for $k=4$ when lifting from $k=3$.

In this paper, our goal is to prove better upper bounds in the $k=4$ case 
\emph{directly} by using suitable parametrizations of the solutions and not by
lifting from the $k=3$ case. The problem with this approach is, that we want to
use a parametrization where the number of parameters increases exponentially
with $k$. The new method applied does not only use a suitable parametrization 
but in view of the increased complexity also has a computational part. In particular, we make heavy use of a computer algebra system to accomplish the following tasks:
\begin{itemize}
\item Find many \textit{defining sets}. By this we mean subsets of the parameters such that once they are fixed, we have at most of order $n^{\varepsilon}$ choices for the remaining parameters.
\item Find products of parameters which are small in terms of $n$ and such that the parameters appearing as factors may be partitioned into many defining sets.
\end{itemize} 
Note that what we call ``defining sets'' above are approximate parametrizations
in some sense. ``Defining sets'' are not in one-to-one correspondence with
solutions of equation~\eqref{eq:generalUFeq} as we would have with a full
parametrization. Nonetheless, fixing integer values for all parameters in 
a ``defining set'' allows for very few (in our sense $\mathcal{O}_{\varepsilon}(n^{\varepsilon})$) solutions for this equation instead of just a single one.    

Our main result is the following.

\begin{theorem} \label{thm:4UFThm}
For $m,n \in \mathbb{N}$ we have
$$f_4(m,n) \ll_{\varepsilon} n^{\varepsilon}\min \bigg\{ \frac{n^{\nicefrac{3}{2}  }}{m^{\nicefrac{3}{4} }}, \frac{n^{\nicefrac{8}{5}}}{m}\bigg\}.$$
\end{theorem}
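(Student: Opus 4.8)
The plan is to establish the two bounds $f_4(m,n) \ll_\varepsilon n^\varepsilon n^{3/2}/m^{3/4}$ and $f_4(m,n) \ll_\varepsilon n^\varepsilon n^{8/5}/m$ separately, each by reducing the count of solutions of $\frac{m}{n} = \frac1{a_1}+\frac1{a_2}+\frac1{a_3}+\frac1{a_4}$ to a question about counting integer points subject to a polynomial parametrization, and then exploiting the classical divisor bound $d(N) \ll_\varepsilon N^\varepsilon$. First I would fix the ordering $a_1 \le a_2 \le a_3 \le a_4$, which forces the standard size restrictions $\frac{n}{m} < a_1 \le \frac{4n}{m}$, $a_2 \le \frac{3n}{m}\cdot\frac{a_1}{a_1 - n/m}$ (roughly $a_2 \ll n/m$ in the generic range), and so on; in particular $a_1$ ranges over only $O(n/m)$ values. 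The sub-sum $\frac1{a_3}+\frac1{a_4} = \frac{m}{n} - \frac1{a_1} - \frac1{a_2} =: \frac{M}{N}$ is then a sum of two unit fractions of a rational number of height at most $n \cdot a_1 a_2$, and the number of ways to write a given $\frac{M}{N}$ as $\frac1{a_3}+\frac1{a_4}$ is $\ll_\varepsilon (N)^\varepsilon$ by the classical two-unit-fraction parametrization $a_3 = \frac{N+d}{M}\cdot\frac{?}{?}$ — more precisely it is governed by divisors of $M^2 \cdot(\text{stuff})$ and hence is $\ll_\varepsilon n^\varepsilon$. So everything comes down to bounding the number of admissible pairs $(a_1,a_2)$, equivalently the number of pairs for which $\frac{m}{n}-\frac1{a_1}-\frac1{a_2}$ is a positive rational expressible as a sum of two unit fractions.

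For the first bound $n^{3/2}/m^{3/4}$, the idea is a balancing/pigeonhole argument on $a_2$. Writing $a_1 = \frac{n}{m} + \frac{n}{m t}$-type substitutions (or rather, setting $b_1 = a_1 - n/m$ so that $a_1 \mid$ something), one gets that $a_2$ is essentially determined modulo divisor functions by $a_1$ together with one extra divisibility datum whose size is controlled. The point is to split the range of $a_2$ at a threshold $T$: when $a_2 \le T$ we bound the number of $(a_1,a_2)$ trivially by $\frac{n}{m}\cdot T$ (times $n^\varepsilon$), and when $a_2 > T$ we use that $\frac1{a_3}+\frac1{a_4} \le \frac{2}{a_3}$ forces $a_3 \ll a_2$ and the residual equation pins down $a_3$ (hence $a_4$) up to divisor functions, while $a_2$ itself is constrained by a divisor condition coming from $\frac{m}{n} - \frac1{a_1}$ having bounded numerator, giving $\ll_\varepsilon n^\varepsilon \cdot \frac{n}{m}\cdot \frac{1}{T}\cdot(\text{something of size } n)$; optimizing $T$ yields the exponent $3/2$ on $n$ and $3/4$ on $m$. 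This is exactly the kind of "defining set" bookkeeping the introduction describes, so I would organize it as: (i) identify which pairs/triples of parameters, once fixed, leave $O_\varepsilon(n^\varepsilon)$ completions; (ii) locate a product of the remaining free parameters that is $\ll n^{3/2}/m^{3/4}$ up to $n^\varepsilon$; (iii) conclude.

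For the second bound $n^{8/5}/m$, I expect a more elaborate parametrization is needed, presumably the one used to prove \eqref{eq:EP3Bound}: after fixing $a_1$ (the $O(n/m)$ factor), the remaining three-term equation $\frac{m'}{n'} = \frac1{a_2}+\frac1{a_3}+\frac1{a_4}$ with $n' = n a_1$, $m' = m a_1 - n$ has, by \eqref{eq:EP3Bound}, at most $\ll_\varepsilon n^\varepsilon \big((n')^3/(m')^2\big)^{1/5}$ solutions. The difficulty is that $m' = m a_1 - n$ can be as small as $1$ even when $m$ is large, so one cannot naively plug in; instead one must sum $\big((n a_1)^3/(m a_1 - n)^2\big)^{1/5}$ over the $O(n/m)$ admissible values of $a_1$, and the terms with $m a_1 - n$ small must be handled by a separate argument (e.g. for those $a_1$ the fraction $\frac{m'}{n'}$ is tiny, forcing $a_2$ large, which re-introduces savings, or one applies \eqref{eq:BE3Bound} which is better when $m'$ is small). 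Balancing the contribution of the "generic" $a_1$ (where $m a_1 - n \asymp m a_1 \asymp n$, giving each term $\ll n^{2/5}$ and total $\ll \frac{n}{m} n^{2/5} = n^{7/5}/m$) against the "exceptional" small-$m'$ range should produce the $n^{8/5}/m$ term. I would therefore: (i) fix $a_1$; (ii) apply the $k=3$ bounds \eqref{eq:BE3Bound} and \eqref{eq:EP3Bound} to the residual equation, choosing whichever is stronger for the given size of $m a_1 - n$; (iii) sum the resulting bound over $a_1 \in (\,n/m,\ 4n/m\,]$, splitting the sum at the point where the two $k=3$ bounds cross; (iv) verify the total is $\ll_\varepsilon n^\varepsilon n^{8/5}/m$, then combine with the first bound via the minimum.

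The main obstacle, in both parts, is the interplay between the parameter $a_1$ (and in part one, $a_2$) being \emph{small}, which is good for the count of that parameter, and the residual numerator $m a_1 - n$ being \emph{small}, which is bad because it weakens the available bound on the residual equation; the whole proof lives or dies on choosing the splitting thresholds so that these two effects balance. A secondary technical point is making the two-unit-fraction count genuinely $\ll_\varepsilon n^\varepsilon$ \emph{uniformly} in the height of the residual fraction — this is where one must be careful that the height is polynomial in $n$ (it is, since $a_1, a_2 \ll n/m \le n$), so that $n^\varepsilon$ truly absorbs the divisor functions; the introduction's remark that divisor functions "have little impact on the upper bound" is precisely the assertion that needs to be cashed out carefully here.
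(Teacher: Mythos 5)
Your outline is, in essence, the Browning--Elsholtz lifting procedure (fix the small denominators, apply the known $k=3$ bounds \eqref{eq:BE3Bound} and \eqref{eq:EP3Bound} to the residual fraction, sum over $a_1$), which is exactly the route the paper announces it will \emph{not} take --- and for the bound $n^{\nicefrac{3}{2}}/m^{\nicefrac{3}{4}}$ it cannot succeed. At $m=O(1)$ that bound asserts $n^{\nicefrac{3}{2}+\varepsilon}$, whereas summing \eqref{eq:BE3Bound} over $a_1\ll n/m$ bottoms out at $n^{\nicefrac{5}{3}}$, summing \eqref{eq:EP3Bound} at $n^{\nicefrac{9}{5}}$ (taking the better of the two for each $a_1$ still gives $n^{\nicefrac{5}{3}}$), and the best previously known exponent, $\nicefrac{28}{17}$, already needed extra $k=4$ information on top of the lifting. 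Your proposed substitute --- split the range of $a_2$ at a threshold $T$, bound one regime trivially by $\frac{n}{m}T$ and the other by an unspecified ``divisor condition'' on $a_2$, and optimize --- never identifies that condition, and this is precisely where the difficulty lives: there is no useful constraint on $a_2$ alone beyond $a_2\ll n^2/m$, and the saving must come from a joint multiplicative constraint on all four denominators. The paper manufactures it by writing $a_i=n_i\prod_{J\ni i}x_J$ with eleven relative greatest common divisors, introducing the auxiliary integers $z_J$ of \eqref{eq:zijdef}--\eqref{eq:zijkdef}, verifying that six specific parameter subsets are ``defining sets'' (Lemma~\ref{lem:definingSetLemma}), and exhibiting the identity-plus-inequality \eqref{eq:23expEq}, whose left side factors into \emph{four} defining-set products and whose right side is $\ll n^{6}/m^{3}$; the exponents $\nicefrac{3}{2}=\nicefrac{6}{4}$ and $\nicefrac{3}{4}$ come from a fourth-root pigeonhole among those four factors. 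A two-parameter threshold argument structurally produces square roots, not fourth roots, and nothing in your sketch produces a product of four determining quantities of total size $n^{6}/m^{3}$.

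For the second bound your plan is more concrete but still does not give the statement as claimed: summing \eqref{eq:EP3Bound} applied to $\frac{ma_1-n}{na_1}$ over admissible $a_1$ (with the splitting you describe) yields a contribution of order $n^{\nicefrac{9}{5}}/m^{\nicefrac{8}{5}}$ from the generic $a_1$, which exceeds $n^{\nicefrac{8}{5}}/m$ whenever $m\ll n^{\nicefrac{1}{3}}$, so the inequality $f_4(m,n)\ll_\varepsilon n^{\varepsilon}n^{\nicefrac{8}{5}}/m$, asserted for all $m,n$, is not reached for small $m$ by this route. In the regime $m\gg n^{\nicefrac{4}{5}}$ where this bound is actually the operative minimum, your computation does appear to give the right order of magnitude, and that would be a genuinely simpler argument \emph{there}; but it cannot carry the theorem, since for smaller $m$ it leans on the first bound, which you have not established. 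The paper instead proves $n^{\nicefrac{8}{5}}/m$ uniformly by the same mechanism as the first bound: inequality \eqref{eq:factoringineq} packages five defining-set products into a quantity $\ll n^{8}/m^{5}$, and a fifth-root pigeonhole finishes. The irreducible missing ingredient in your proposal is the construction and verification of the defining sets and of the two product inequalities \eqref{eq:23expEq} and \eqref{eq:factoringineq}; these were located by a computer search through the system \eqref{eq:systemeq1}--\eqref{eq:systemeq6}, and there is no shortcut through the two- and three-unit-fraction bounds alone.
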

Together with the two bounds in  \eqref{eq:BEkgeq4Bounds} and \eqref{eq:EPkgeq4Bounds} 
this gives:
\begin{corollary}
For $m,n \in \mathbb{N}$ we have
$$f_4(m,n) \ll_{\varepsilon} n^{\varepsilon}\min 
\bigg\{ \frac{n^{\nicefrac{3}{2}}}{m^{\nicefrac{3}{4}}}, 
\frac{n^{\nicefrac{8}{5}}}{m},
\bigg(\frac{n^{\nicefrac{4}{3}}}{m^{\nicefrac{2}{3}}}+\frac{n^{\nicefrac{28}{17}}}{m^{\nicefrac{8}{5}}}\bigg),
\bigg(\Big(\frac{n}{m}\Big)^{\frac{5}{3}}+\frac{n^{\frac{4}{3}}}{m^{\frac{2}{3}}}\bigg)
\bigg\}.$$
\end{corollary}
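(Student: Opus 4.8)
The Corollary is immediate from Theorem~\ref{thm:4UFThm}: it is simply the pointwise minimum of the bound proved there with the previously established bounds \eqref{eq:BEkgeq4Bounds} and \eqref{eq:EPkgeq4Bounds}, both valid for all $m,n\in\mathbb N$. So the content to be argued is Theorem~\ref{thm:4UFThm}, and the plan is to establish its two estimates, $f_4(m,n)\ll_\varepsilon n^{\varepsilon}\,n^{3/2}/m^{3/4}$ and $f_4(m,n)\ll_\varepsilon n^{\varepsilon}\,n^{8/5}/m$, by separate but structurally parallel arguments that work \emph{directly} with a parametrization of the $4$-term solution set. Lifting the $3$-term bound \eqref{eq:EP3Bound} through a single summation over $a_1$ only reproduces the weaker exponents of \eqref{eq:EPkgeq4Bounds}, so extra structure has to be exploited.

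First I would build the parametrization. From $a_1\le a_2\le a_3\le a_4$ and $\frac mn=\sum_i 1/a_i$ one reads off $n/m\le a_1\le 4n/m$, and inductively, once $a_1,\dots,a_{i-1}$ are fixed, the residual $\frac mn-\sum_{j<i}1/a_j$ is a positive fraction $\frac{p_i}{q_i}$ in lowest terms that must be a sum of $5-i$ unit fractions with smallest part $a_i\ge a_{i-1}$; this confines $a_i$ to an interval with endpoints built from $p_i,q_i,i$. Writing $a_i=g_ia_i'$ and splitting off the common factor of $p_ia_i-q_i$ and $q_ia_i$ introduces, at each of the four steps, a bounded number of integer ``gcd-parameters''. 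After the fourth step each $a_i$ is a fixed monomial in a finite list of parameters $x_1,\dots,x_r$ (the number $r$ grows with $k$, but for $k=4$ it is a concrete constant), subject to coprimality and divisibility constraints, to the size constraints inherited from the nested intervals and the positivity of the residuals, and to the single polynomial identity encoding $\frac mn=\sum_i 1/a_i$.

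The counting is then organised around \emph{defining sets}: subsets $S\subseteq\{x_1,\dots,x_r\}$ such that fixing integer values of the $x_i$ with $i\in S$ forces every other parameter to divide a quantity already determined, so that by the divisor bound there are only $O_\varepsilon(n^{\varepsilon})$ completions, hence $O_\varepsilon(n^{\varepsilon})$ solutions. A computer algebra system is used to enumerate candidate subsets, propagate the divisibility relations together with the identity, certify which subsets are genuinely defining, and search for a product $P=\prod_{i\in T}x_i$ that (i) has an a priori upper bound $P\ll n^{a}/m^{b}$ coming from the size constraints and (ii) has index set $T$ partitioning into $t$ defining sets $S_1,\dots,S_t$. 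For such a $P$, from $\prod_{j=1}^{t}\bigl(\prod_{i\in S_j}x_i\bigr)=P$ one of the $t$ products is $\ll(n^{a}/m^{b})^{1/t}$; since the number of tuples of positive integers (in a bounded number of coordinates) with product at most $B$ is $\ll_\varepsilon B^{1+\varepsilon}$ and each choice of a defining set has $O_\varepsilon(n^{\varepsilon})$ completions, summing over the $t$ cases gives $f_4(m,n)\ll_\varepsilon(n^{a}/m^{b})^{1/t+\varepsilon}$. One arranges the search so that $a/t=3/2,\ b/t=3/4$ for the first bound and $a/t=8/5,\ b/t=1$ for the second; taking $t$ as large as possible is precisely why one wants \emph{many} defining sets, and when a defining set happens to consist of individually bounded parameters one may instead just multiply those bounds.

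The weight of the proof sits on two certifications. First, that a subset really is a defining set: one must follow, through the iterated gcd-decompositions, exactly which coprimalities survive and check that each non-fixed parameter is pinned to a divisor of something known — a spuriously ``defining'' set would destroy the bound, so this is exact symbolic bookkeeping rather than heuristics, and this is the main reason a computer is needed. Second, and harder, the size bound $P\ll n^{a}/m^{b}$: the parameters controlling $a_4$ (and sometimes $a_3$) have no useful individual upper bound, so $T$ must avoid them or neutralise them using the identity and the nested inequalities, and the best admissible $(a,b,t)$ depends on how $m$ compares with $n$. This is a case-heavy optimization — it is what splits the argument into the several $m$-versus-$n$ regions referred to in the abstract — so the choice of $P$ and of its partition is delegated to a computer search over candidates, and the final bound is the envelope of the regional estimates, which collapses to the stated minimum of two terms.
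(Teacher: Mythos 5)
Your proof of the Corollary is exactly the paper's: the statement follows immediately by taking the pointwise minimum of Theorem~\ref{thm:4UFThm} with the earlier bounds \eqref{eq:BEkgeq4Bounds} and \eqref{eq:EPkgeq4Bounds}. Your supporting sketch of Theorem~\ref{thm:4UFThm} also matches the paper's method (computer-certified defining sets inside a gcd-based parametrization, multiplicatively combined size inequalities, pigeonhole plus the divisor bound), the only notable cosmetic differences being that the paper parametrizes via the symmetric subset-indexed relative greatest common divisors $x_J$ of the $t_i$ (together with the pattern $(n_1,\ldots,n_4)$) rather than a sequential residual-fraction decomposition, and that each of the two bounds in Theorem~\ref{thm:4UFThm} is proved uniformly in $m,n$ from a single inequality (\eqref{eq:23expEq} resp.\ \eqref{eq:factoringineq}) rather than by a region-by-region optimization.
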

This new result shows that the analysis of the number sums of 4 and more 
unit fractions might be much more complicated than was previously known. 
\begin{remark}
%Which of the two bounds in Theorem~\ref{thm:4UFThm} is better, depends on the size of $m$. 
%For $m \ll n^{\nicefrac{2}{5}}$ the minimum is attained at $\frac{n^{\nicefrac{3}{2}}}{m^{\nicefrac{3}{4}}}$, 
%otherwise $\frac{n^{\nicefrac{8}{5}}}{m}$ is smaller. 
In equation \eqref{eq:generalUFeq} one generally has that $a_1 \ll n, a_2 \ll n^2, a_3 \ll n^4$. Hence
 there are at most ${\mathcal O}(n^7)$ choices for $a_1,a_2$ and $a_3$, and then $a_4$ is unique, if it exists.
Hence $f_4(m,n)\ll n^{7}$ is a completely trivial upper bound.
However, fixing only $a_1$ and $a_2$ one sees that the number of pairs $(a_3,a_4)$ is bounded by a divisor function, 
(for details see e.g. \cite{ElsholtzPlanitzer}).
Hence $f_4(m,n)\ll n^{3+\varepsilon}$ is still a trivial upper bound.
The worst we would get from Theorem~\ref{thm:4UFThm}, when $m$ is small, 
would be an upper bound of order $n^{\nicefrac{3}{2}+\varepsilon}$.

Furthermore, if we compare the two upper bounds on 
$f_4(m,n)$ in Theorem~\ref{thm:4UFThm} with the previous bounds 
$n^{\varepsilon} \Big(\Big( \frac{n^{\nicefrac{28}{17}}}{m^{\nicefrac{8}{5}}}
+\frac{n^{\frac{4}{3}}}{m^{\frac{2}{3}}}\Big)$
in~\eqref{eq:EPkgeq4Bounds} and  
$n^{\varepsilon}\big(\big(\frac{n}{m}\big)^{\nicefrac{5}{3}}+\frac{n^{\nicefrac{4}{3}}}{m^{\nicefrac{2}{3}}}\big)$
in~\eqref{eq:BEkgeq4Bounds}, we see that each of these four bounds is best in some cases, and when splitting the contributions of the two parts in 
$\mathcal{O}\Bigl(\frac{n^{\nicefrac{28}{17}}}{m^{\nicefrac{8}{5}}} +\frac{n^{\nicefrac{4}{3}}}{m^{\nicefrac{2}{3}}} \Bigr)$, 
we see that there are even five different upper bounds involved:

To present these results in a uniform way we write exponents as $\nicefrac{\alpha}{30345}$, where $30345$ is the smallest integer
avoiding further fractions in the boundaries below.
For fractions $\frac{m}{n}$ with $m=n^{\nicefrac{\alpha}{30345}}$, where $\alpha$ is a real parameter in 
$0 \leq \alpha \leq 30345$, the following holds, (omitting the $n^{\varepsilon}$ factor):
\begin{itemize}
\item $0 \leq \alpha \leq 5250$: the upper bound of order $\frac{n^{\nicefrac{3}{2}}}{m^{\nicefrac{3}{4}}}$ from Theorem~\ref{thm:4UFThm} is the sharpest one.
\item $5250 \leq \alpha \leq 8925$: the bound 
$\frac{n^{\nicefrac{28}{17}}}{m^{\nicefrac{8}{5}}}$
from~\eqref{eq:EPkgeq4Bounds} gives the best bound.
\item $8925 \leq \alpha  \leq 10115$: in this range the bound
  $(\frac{n}{m})^{\nicefrac{5}{3}}$ from \eqref{eq:BEkgeq4Bounds} yields the
  lowest upper bound. (Note that $10115/30345=1/3$.)
\item $10115\leq \alpha\leq 10200$. In this very
  small region the bound 
 $\frac{n^{\nicefrac{4}{3}}}{m^{\nicefrac{2}{3}}}$
 from \eqref{eq:BEkgeq4Bounds} gives the best bound.
\item $10200 \leq \alpha \leq 24276$. (Note that $24276/30345=4/5)$. In this region
the bound is also $\frac{n^{\nicefrac{4}{3}}}{m^{\nicefrac{2}{3}}}$, but this
time it comes from
 \eqref{eq:BEkgeq4Bounds} and \eqref{eq:EPkgeq4Bounds}.
\item $24276 \leq \alpha \leq 30345$: the second bound in Theorem~\ref{thm:4UFThm} 
which is of order $\frac{n^{\nicefrac{8}{5}}}{m}$ gives the best bound. 
\end{itemize}

At the points of transition, i.e. $\alpha \in \{5250,8925,10115, 10200,24276\}$, in these inequalities the corresponding upper bounds are equally sharp.
\end{remark}
We summarize this in the following corollary.
\begin{corollary}
For $m,n \in \mathbb{N}$ we have
$$f_4(m,n) \ll_{\varepsilon} 
\begin{cases}
n^{\varepsilon} \frac{n^{\nicefrac{3}{2}}}{m^{\nicefrac{3}{4}}}& 
\text{ if }m  \ll n^{\nicefrac{50}{289}},\\ 
n^{\varepsilon} \frac{n^{\nicefrac{28}{17}}}{m^{\nicefrac{8}{5}}}&
\text{ if } n^{\nicefrac{50}{289}}\ll m  \ll n^{\nicefrac{5}{17}},\\ 
n^{\varepsilon} \Big(\frac{n}{m}\Big)^{\nicefrac{5}{3}}&
\text{ if } n^{\nicefrac{5}{17}}\ll m  \ll n^{\nicefrac{1}{3}},\\ 
n^{\varepsilon}\frac{n^{\nicefrac{4}{3}}}{m^{\nicefrac{2}{3}}}& 
\text{ if }  n^{\nicefrac{1}{3}} \ll m  \ll n^{\nicefrac{4}{5}},\\  
n^{\varepsilon} \frac{n^{\nicefrac{8}{5}}}{m}& \text{ if }
n^{\nicefrac{4}{5}} \ll m  \ll n .
\end{cases}
$$
\end{corollary}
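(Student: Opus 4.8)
The plan is to obtain this corollary by merely combining the four upper bounds now at our disposal---the two bounds of Theorem~\ref{thm:4UFThm} together with \eqref{eq:BEkgeq4Bounds} and \eqref{eq:EPkgeq4Bounds}---and carrying out the elementary minimisation already outlined in the preceding Remark. First I would record that $f_4(m,n)$ is \emph{simultaneously} bounded by $n^{\varepsilon}$ times each of $\frac{n^{\nicefrac{3}{2}}}{m^{\nicefrac{3}{4}}}$, $\frac{n^{\nicefrac{8}{5}}}{m}$, $\Big(\frac{n}{m}\Big)^{\nicefrac{5}{3}}+\frac{n^{\nicefrac{4}{3}}}{m^{\nicefrac{2}{3}}}$ and $\frac{n^{\nicefrac{4}{3}}}{m^{\nicefrac{2}{3}}}+\frac{n^{\nicefrac{28}{17}}}{m^{\nicefrac{8}{5}}}$; since each of the last two sums equals, up to a factor $2$, the larger of its two summands, it suffices to decide, for every ratio $\frac{m}{n}$ with $m\ll n$, which of the five monomials
$$\frac{n^{\nicefrac{3}{2}}}{m^{\nicefrac{3}{4}}},\qquad \frac{n^{\nicefrac{8}{5}}}{m},\qquad \frac{n^{\nicefrac{4}{3}}}{m^{\nicefrac{2}{3}}},\qquad \frac{n^{\nicefrac{28}{17}}}{m^{\nicefrac{8}{5}}},\qquad \Big(\frac{n}{m}\Big)^{\nicefrac{5}{3}}$$
is smallest, subject to the constraint that a monomial picked from one of the two sums must dominate the other summand of that sum.

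Next I would substitute $m=n^{\beta}$ with $0\le\beta\le1$ (the case $n=1$ being trivial). The base-$n$ exponents of the five monomials then become the affine functions
$$e_1(\beta)=\tfrac{3}{2}-\tfrac{3}{4}\beta,\quad e_2(\beta)=\tfrac{8}{5}-\beta,\quad e_3(\beta)=\tfrac{4}{3}-\tfrac{2}{3}\beta,\quad e_4(\beta)=\tfrac{28}{17}-\tfrac{8}{5}\beta,\quad e_5(\beta)=\tfrac{5}{3}-\tfrac{5}{3}\beta,$$
and by the previous paragraph $f_4(m,n)\ll_{\varepsilon}n^{\varepsilon}\,n^{E(\beta)}$, where
$$E(\beta)=\min\big(e_1(\beta),\ e_2(\beta),\ \max(e_3(\beta),e_4(\beta)),\ \max(e_3(\beta),e_5(\beta))\big).$$
Since every $e_i$ is affine, $E$ is concave and piecewise affine, so all I would need are the crossover abscissae and the slopes. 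A short computation gives $e_1=e_4$ at $\beta=\tfrac{50}{289}$, $e_4=e_5$ at $\beta=\tfrac{5}{17}$, $e_5=e_3$ at $\beta=\tfrac{1}{3}$, and $e_3=e_2$ at $\beta=\tfrac{4}{5}$; comparing the slopes $-\tfrac{3}{4},-1,-\tfrac{2}{3},-\tfrac{8}{5},-\tfrac{5}{3}$ with the values of the $e_i$ at these four abscissae (and at $\beta=0$ and $\beta=1$) then shows that $E=e_1$ on $[0,\tfrac{50}{289}]$, $E=e_4$ on $[\tfrac{50}{289},\tfrac{5}{17}]$, $E=e_5$ on $[\tfrac{5}{17},\tfrac{1}{3}]$, $E=e_3$ on $[\tfrac{1}{3},\tfrac{4}{5}]$, and $E=e_2$ on $[\tfrac{4}{5},1]$.

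Finally I would translate these thresholds on $\beta$ back into the stated conditions on $m$ relative to $n$, which produces precisely the five cases of the corollary. At each crossover the two competing bounds coincide, so the implied constants in ``$\ll$'' and the precise placement of the boundaries are harmless; likewise, the further subdivision of $[\tfrac{1}{3},\tfrac{4}{5}]$ into $[\tfrac{1}{3},\tfrac{40}{119}]$ and $[\tfrac{40}{119},\tfrac{4}{5}]$ that appears in the Remark is immaterial here, since $\frac{n^{\nicefrac{4}{3}}}{m^{\nicefrac{2}{3}}}$ is the minimiser throughout $[\tfrac{1}{3},\tfrac{4}{5}]$ (arising from \eqref{eq:BEkgeq4Bounds} on the first sub-range and from both \eqref{eq:BEkgeq4Bounds} and \eqref{eq:EPkgeq4Bounds} on the second). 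I do not expect any genuine obstacle: the whole argument is bookkeeping of the finitely many pairwise comparisons among the $e_i$, and the entire mathematical substance of the corollary is already carried by Theorem~\ref{thm:4UFThm}.
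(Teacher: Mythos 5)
Your proposal is correct and follows essentially the same route as the paper, which establishes this corollary by the case analysis in the preceding Remark: substituting $m=n^{\alpha/30345}$ (your $\beta$), comparing the affine exponents of the five competing monomials, and locating the crossovers at $\tfrac{50}{289},\tfrac{5}{17},\tfrac13,\tfrac45$. Your explicit $\min$--$\max$ formulation and the observation that the subdivision of $[\tfrac13,\tfrac45]$ at $\tfrac{40}{119}$ is immaterial match the paper's treatment.
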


\begin{figure}
\begin{minipage}{1.0\textwidth}
\centering
\includegraphics[width=\textwidth]{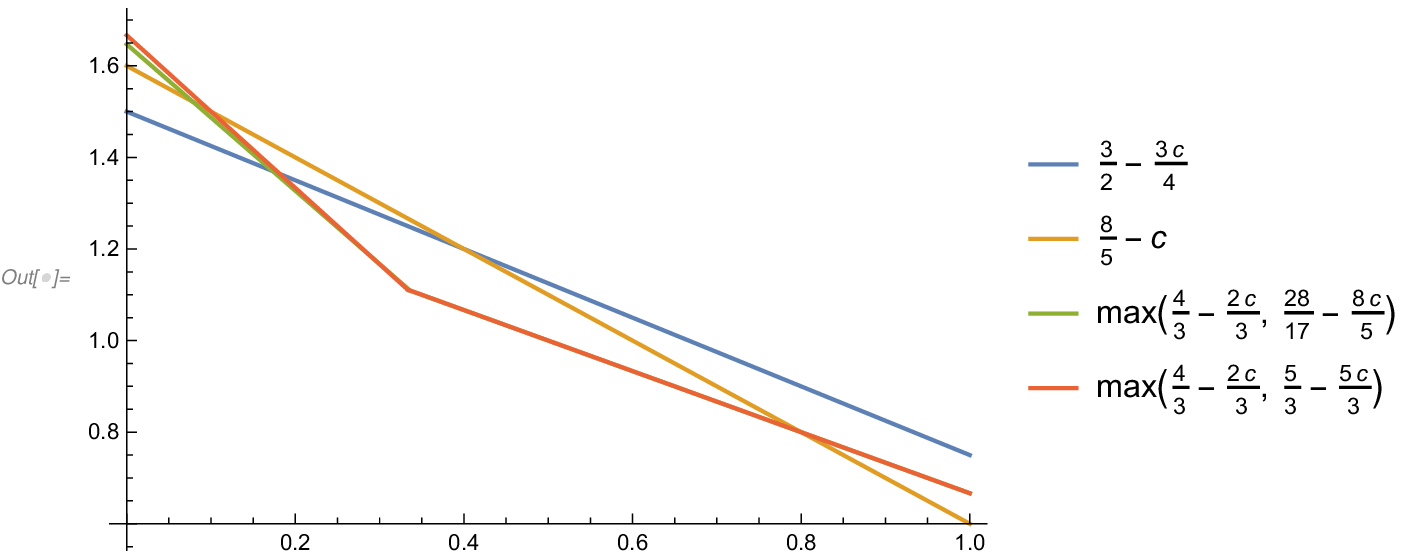}
\caption{The full range with $0 \leq c=\alpha/30345 \leq 1$.}
\label{fig1}
\end{minipage}
\hfill
Recall that the new bounds are the blue line 
(strongest on the left hand side), and beige, 
(strongest on the right hand side of the graph).

\begin{minipage}{1.0\textwidth}
\centering
\includegraphics[width=\textwidth]{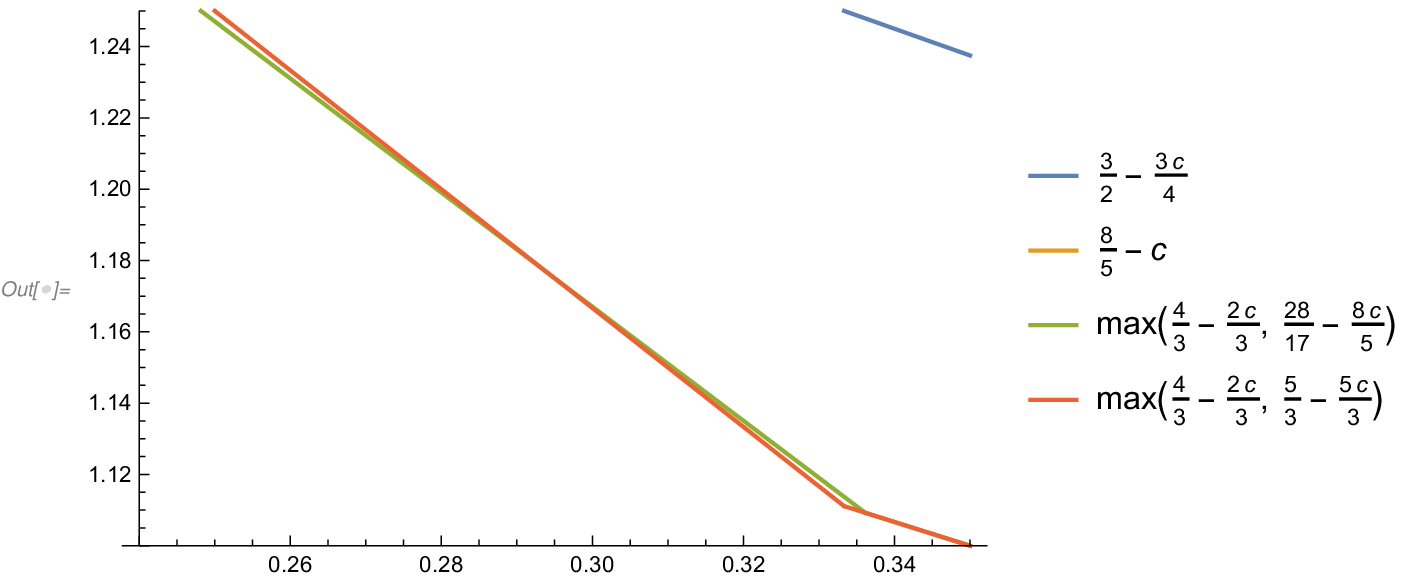}
\caption{The region $0.24 \leq c\leq 0.35$ enlarged, to see the
  crossing of almost parallel lines.
}
\label{fig2}
\end{minipage}
\end{figure}

\begin{remark}
In the proof of Theorem~\ref{thm:4UFThm} we give a method for constructing all representations 
of a rational number $\frac{m}{n}$ as a sum of four unit fractions. 
Along the same lines as the proof of the corresponding result on sums of three unit fractions in~\cite{ElsholtzPlanitzer}, 
it can be shown that there exists an algorithm with expected running time of order 
$$
 n^{\varepsilon}\min 
\bigg\{ \frac{n^{\nicefrac{3}{2}}}{m^{\nicefrac{3}{4}}}, 
\frac{n^{\nicefrac{8}{5}}}{m},
\bigg(\frac{n^{\nicefrac{4}{3}}}{m^{\nicefrac{2}{3}}}+\frac{n^{\nicefrac{28}{17}}}{m^{\nicefrac{8}{5}}}\bigg),
\bigg(\Big(\frac{n}{m}\Big)^{\frac{5}{3}}+\frac{n^{\frac{4}{3}}}{m^{\frac{2}{3}}}\bigg)
\bigg\}$$
%n^{\varepsilon}\min \big\{ \frac{n^{\nicefrac{3}{2}}}{m^{\nicefrac{3}{4}}}, 
%\frac{n^{\nicefrac{8}{5}}}{m}\big\}$ 
listing these solutions. In particular, we can decide within the same time
constraints whether or not the rational number $\frac{m}{n}$ has a
representation of this form. A precise formulation of this result would make
use of the complexity of factorizations. For details we refer to \cite{ElsholtzPlanitzer}.
\end{remark}

Again the bound on sums of four unit fractions can be lifted to upper bounds for $k>4$.

\begin{theorem} \label{thm:kgeq5UFThm}
For $m,n \in \mathbb{N}$ and $k\geq 5$ we have
$$f_k(m,n) \ll_\varepsilon (kn)^{\varepsilon}\bigg(\frac{k^{\nicefrac{4}{3}}n^2}{m}\bigg)^{(\nicefrac{8}{5})\cdot 2^{k-5}}.$$
\end{theorem}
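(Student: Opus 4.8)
The plan is to derive Theorem~\ref{thm:kgeq5UFThm} from the $k=4$ bound in Theorem~\ref{thm:4UFThm} by the same lifting procedure that was used in~\cite{BrowningElsholtz} and refined in~\cite{ElsholtzPlanitzer}, simply feeding in the improved exponent $\nicefrac{8}{5}$ in place of the previous $\nicefrac{28}{17}$. First I would recall the structure of the lifting step: given a solution $\frac{m}{n}=\sum_{i=1}^k\frac{1}{a_i}$ with $a_1\le\cdots\le a_k$, one isolates the largest denominator(s) and observes that the smallest denominator must satisfy $a_1\le \nicefrac{kn}{m}$, so there are at most $\nicefrac{kn}{m}$ choices for $a_1$. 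After fixing $a_1$, the remaining tail $\frac{m}{n}-\frac{1}{a_1}=\sum_{i=2}^k\frac{1}{a_i}$ is again an equation of the same shape, now with $k-1$ unit fractions and a new rational $\frac{m'}{n'}$ whose numerator and denominator are controlled by $m n a_1 \ll \nicefrac{k n^2}{m}$; one iterates this down to $k=4$ and then applies Theorem~\ref{thm:4UFThm}.

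The key steps, in order, are: (i) state the single-step recursion $f_k(m,n)\ll \frac{kn}{m}\cdot \max_{m',n'} f_{k-1}(m',n')$ where the maximum is over the $\mathcal{O}(1)$ essentially different reduced fractions arising with $n' \ll \nicefrac{k n^2}{m}$ (this is exactly Lemma-type statement already present in~\cite{BrowningElsholtz, ElsholtzPlanitzer}, which I may assume); (ii) iterate this $k-4$ times to reduce to $f_4$, tracking that each iteration roughly squares the quantity $\nicefrac{k n^2}{m}$ that measures the size of the problem, which is the source of the $2^{k-5}$ in the exponent; (iii) insert the bound $f_4(m,n)\ll_\varepsilon n^\varepsilon (\nicefrac{n^2}{m})^{\nicefrac{8}{5}\cdot\frac12}$— note that $\frac{n^{\nicefrac{8}{5}}}{m}=\bigl(\frac{n^2}{m}\bigr)^{\nicefrac{8}{5}\cdot\frac12}\cdot\frac{1}{m^{\nicefrac15}}\le\bigl(\frac{n^2}{m}\bigr)^{\nicefrac45}$, so that at the base of the recursion we have $f_4(m,n)\ll_\varepsilon n^\varepsilon\bigl(\frac{n^2}{m}\bigr)^{\nicefrac45}=n^\varepsilon\bigl(\frac{n^2}{m}\bigr)^{(\nicefrac85)\cdot 2^{-1}}$, which is precisely the $k=4$ instance of the claimed formula up to the $k^{\nicefrac43}$ factor; (iv) bookkeep the accumulated powers of $k$ and of $n^\varepsilon$, absorbing them into $(kn)^\varepsilon$ and the $k^{\nicefrac43}$ factor exactly as in~\cite{ElsholtzPlanitzer}. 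Since the recursion is formally identical to the one already carried out there, the exponent arithmetic $\nicefrac{8}{5}$ in place of $\nicefrac{28}{17}$ propagates verbatim.

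The main obstacle is not conceptual but bookkeeping: one must check that replacing the base-case exponent does not disturb the balance between the two terms that appeared in the old $f_4$ bound~\eqref{eq:EPkgeq4Bounds}. The previous lifted bound had to carry both summands $\frac{n^{\nicefrac{4}{3}}}{m^{\nicefrac23}}$ and $\frac{n^{\nicefrac{28}{17}}}{m^{\nicefrac85}}$ through the recursion, whereas here the relevant regime for the lift is precisely the one where $m$ is comparatively small (since $n'/m'$ grows), so only the dominant term $\frac{n^{\nicefrac85}}{m}\le\bigl(\frac{n^2}{m}\bigr)^{\nicefrac45}$ of Theorem~\ref{thm:4UFThm} is needed as input, and one single clean term survives the iteration. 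I would therefore verify explicitly in step (iii) that for all $m\le n$ the inequality $\frac{n^{\nicefrac85}}{m}\le\bigl(\frac{n^2}{m}\bigr)^{\nicefrac45}$ holds (equivalently $m^{\nicefrac15}\ge 1$), making the substitution into the existing recursion legitimate, and then cite~\cite{BrowningElsholtz, ElsholtzPlanitzer} for the purely mechanical iteration that produces the stated exponent $(\nicefrac85)\cdot 2^{k-5}$ and the factors $(kn)^\varepsilon$ and $k^{\nicefrac43}$.
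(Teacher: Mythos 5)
Your overall architecture (fix the smallest denominator $a_1$, reduce to $f_4$, then invoke the Browning--Elsholtz/Elsholtz--Planitzer lifting machinery) is the same as the paper's. However, step (iii) contains a genuine error. You propose to weaken the input bound to $f_4(m',n')\ll_\varepsilon (n')^{\varepsilon}\bigl(\frac{(n')^2}{m'}\bigr)^{\nicefrac45}$ before summing over $a_1$. This discards exactly the factor $(m')^{\nicefrac15}$ that makes the $4\to 5$ step work. Writing $u=ma_1-n$ (so $m'=u$, $n'=n(u+n)/m\ll n^2/m$ and $u$ ranges over $1\le u\le 4n$), the paper keeps the bound in the form $\frac{(n')^{\nicefrac85}}{m'}$, whose exponent of $m'$ is exactly $-1$, so that
\[
f_5(m,n)\le \sum_{0<u\le 4n} f_4\Bigl(u,\tfrac{n(u+n)}{m}\Bigr)
\ll_\varepsilon n^{\varepsilon}\Bigl(\tfrac{n^2}{m}\Bigr)^{\nicefrac85}\sum_{0<u\le 4n}\tfrac1u
\ll_\varepsilon n^{\varepsilon}\Bigl(\tfrac{n^2}{m}\Bigr)^{\nicefrac85}.
\]
With your weakened form the same sum becomes $\sum_{u\le 4n}u^{-\nicefrac45}\gg n^{\nicefrac15}$, so you only obtain $f_5(m,n)\ll n^{\varepsilon+\nicefrac15}(n^2/m)^{\nicefrac85}$ (or, bounding $m'\ge1$ trivially as in the generic doubling step, the even worse $\frac{n}{m}(n^2/m)^{\nicefrac85}$). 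This loss is not absorbable and then doubles at every subsequent lifting step, so the claimed exponent is not recovered. This is precisely why the paper stresses that the companion bound $\frac{n^{\nicefrac32}}{m^{\nicefrac34}}$ cannot be used here: the exponent of $m$ must be $-1$ (or smaller) for the harmonic sum over $u$ to be logarithmic.

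The correct bookkeeping, which also resolves your worry about $2^{k-5}$ versus $2^{k-4}$, is to treat the passage $4\to 5$ as a special, non-doubling step carried out with the bound $\frac{n^{\nicefrac85}}{m}$ as above, yielding $f_5(m,n)\ll_\varepsilon n^{\varepsilon}(n^2/m)^{c}$ with $c=\nicefrac85$. Since $c>1$, the generic lifting lemma (Lemma~B, i.e.~\cite{ElsholtzPlanitzer}*{Lemma C}) then applies as a black box for all $k\ge 5$: there the sum $\sum_u u^{-c}$ converges because $c>1$, and each step from $k=5$ onward doubles the exponent, giving $(k^{\nicefrac43}n^2/m)^{c\cdot 2^{k-5}}$. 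Your proposal starts the doubling one step too early, at $k=4$ with base exponent $\nicefrac45<1$, which is exactly the regime where the doubling recursion fails.
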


Note that the improvement in the upper bound in Theorem~\ref{thm:kgeq5UFThm} concerns the constant $\frac{8}{5}$ in the exponent. If we compare the result with the bounds in~\eqref{eq:EPkgeq4Bounds}, we see that, depending on $k$, the difference in the corresponding exponents of $n$ is $\frac{4}{85}\cdot 2^{k-4}$.

The results in Theorem~\ref{thm:kgeq5UFThm} immediately improve several upper bounds for the special case of representing $1$ as a sum of unit fractions. Some of these results are mentioned in~\cite{BrowningElsholtz} with improved upper bounds in~\cite{ElsholtzPlanitzer}. Here we just reformulate~\cite{ElsholtzPlanitzer}*{Corollary 3} by giving the improved upper bounds one gets by using Theorem~\ref{thm:kgeq5UFThm}. The proof is the same as in~\cite{ElsholtzPlanitzer} and~\cite{BrowningElsholtz} after plugging in the new bound.

\begin{corollary}
\begin{enumerate}
\item For any $\varepsilon>0$, we have that
$$f_k(1,1)\ll_{\varepsilon} k^{(\nicefrac{2}{15})\cdot 2^{k-1} + \varepsilon}.$$
\item Let $(u_n)_{n \in \mathbb{N}}$ be the sequence recursively defined by $u_0=1$ and $u_{n+1}=u_n(u_n+1)$ and set $c_0=\lim_{n\rightarrow \infty}u_n^{2^{-n}}$. Then for $\varepsilon > 0$ and $k\geq k(\varepsilon)$ we have
$$f_k(1,1) < c_0^{(\nicefrac{2}{5}+\varepsilon)2^{k-1}}.$$
\item For $\varepsilon>0$ and $k\geq k(\varepsilon)$ the number of positive integer solutions of the equation
$$1=\sum_{i=1}^k\frac{1}{a_i}+\frac{1}{\prod_{i=1}^ka_i}$$
is bounded from above by $c_0^{(\nicefrac{2}{5}+\varepsilon)2^k}$.
\end{enumerate}
\end{corollary}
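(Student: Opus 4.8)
The plan is to deduce the three bounds from Theorem~\ref{thm:kgeq5UFThm} by re-running the arguments of~\cite{BrowningElsholtz} and~\cite{ElsholtzPlanitzer} and recording the effect of the improved exponent $\nicefrac{8}{5}$ (in place of the former $\nicefrac{28}{17}$). For part~(1) nothing more is needed than to specialize Theorem~\ref{thm:kgeq5UFThm} to $m=n=1$, which gives directly
\[
f_k(1,1)\ll_\varepsilon k^\varepsilon\bigl(k^{\nicefrac{4}{3}}\bigr)^{(\nicefrac{8}{5})\cdot 2^{k-5}}=k^{\nicefrac{32}{15}\cdot 2^{k-5}+\varepsilon}=k^{(\nicefrac{2}{15})\cdot 2^{k-1}+\varepsilon},
\]
using $\tfrac{4}{3}\cdot\tfrac{8}{5}=\tfrac{32}{15}$ and $2^{k-5}=\tfrac{1}{16}\cdot 2^{k-1}$; the finitely many $k\le 4$ are trivial.

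The bound of part~(1) is useless for part~(2), since as $k\to\infty$ a factor $k^{\Theta(2^k)}$ dwarfs $c_0^{\Theta(2^k)}$. For part~(2) I would instead reprove the case $(m,n)=(1,1)$ of Theorem~\ref{thm:kgeq5UFThm} ``by hand'', along the ``divide and conquer'' lines of~\cite{BrowningElsholtz,ElsholtzPlanitzer}, exploiting that in a representation of $1$ the denominators of partial sums are governed by the Sylvester-type products $u_j$, for which $\log u_j\sim 2^j\log c_0$. Given $1=\sum_{i=1}^k\frac{1}{a_i}$ with $a_1\le\dots\le a_k$, one fixes $a_1\le k$ if $k$ is odd (at most $k$ choices) and then splits the remaining $k$ or $k-1$ unit fractions into two blocks of $\lfloor k/2\rfloor$ terms each; writing the block sums as $\frac{m_1}{n_1}$ and $\frac{m_2}{n_2}$, one has $n_2\mid n_1$ (or $n_2\mid a_1n_1$), and $n_1$ divides the product of the $a_i$ in the first block. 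The crucial input is the bound
\[
\log n_1,\ \log n_2\le 2^{\lceil k/2\rceil}\log c_0\,(1+o(1)),
\]
which rests on the Sylvester-type lower bound $r_i\gg 1/u_{i-1}$ for the remainders $r_i=1-\sum_{\ell<i}\frac{1}{a_\ell}$ (so that $a_i\ll(k-i+1)u_{i-1}$, and then $\prod_{i<j}u_i\le u_j$ yields the claim). For each admissible pair $(m_1,n_1)$ there are at most $f_{\lfloor k/2\rfloor}(m_1,n_1)$ first blocks and at most $f_{\lfloor k/2\rfloor}(m_2,n_2)$ second blocks, so $f_k(1,1)$ is bounded by $n_1^2$ times the product of the maxima of these two quantities. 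Bounding each $f_{\lfloor k/2\rfloor}$ by Theorem~\ref{thm:kgeq5UFThm}, whose exponent $(\nicefrac{8}{5})\cdot 2^{\lfloor k/2\rfloor-5}$ multiplies $2\log n_\nu$, each block contributes at most
\[
\exp\bigl((\nicefrac{8}{5})\cdot 2^{\lfloor k/2\rfloor-5}\cdot 2\cdot 2^{\lceil k/2\rceil}\log c_0\,(1+o(1))\bigr)=c_0^{(\nicefrac{1}{10})\cdot 2^{k}(1+o(1))},
\]
because $\lfloor k/2\rfloor+\lceil k/2\rceil=k$; multiplying the two blocks and absorbing the prefactor $n_1^2$, the factor $k$ from the choice of $a_1$, and all $n^\varepsilon$- and divisor-type losses (each only $c_0^{o(2^k)}$) gives $f_k(1,1)\le c_0^{(\nicefrac{1}{5})\cdot 2^k(1+o(1))}=c_0^{(\nicefrac{2}{5}+\varepsilon)\cdot 2^{k-1}}$ for $k\ge k(\varepsilon)$.

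Part~(3) then follows from part~(2) with $k$ replaced by $k+1$: given a solution of $1=\sum_{i=1}^k\frac{1}{a_i}+\frac{1}{\prod_{i=1}^k a_i}$, putting $a_{k+1}:=\prod_{i=1}^k a_i\,(\ge a_k)$ turns it into a representation of $1$ as a sum of $k+1$ unit fractions in nondecreasing order, so the number of solutions is at most $f_{k+1}(1,1)<c_0^{(\nicefrac{2}{5}+\varepsilon)\cdot 2^{k}}$; any overcounting caused by the ordering convention contributes only a harmless factor $k!=c_0^{o(2^k)}$. The step I expect to require the most care — exactly as in~\cite{ElsholtzPlanitzer} — is the bookkeeping in part~(2): making precise the Sylvester-type lower bound for the $r_i$ and the resulting estimate for $\log n_1$, and checking that carrying the new exponent $\nicefrac{8}{5}$ through the split does not let the accumulated $n^\varepsilon$ factors, the divisor-function contributions, or the count $n_1^2$ of admissible pairs affect the leading exponent $\nicefrac{2}{5}\cdot 2^{k-1}$. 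No new idea beyond those of~\cite{BrowningElsholtz} and~\cite{ElsholtzPlanitzer} is needed once Theorem~\ref{thm:kgeq5UFThm} is in hand.
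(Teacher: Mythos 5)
Your proposal is correct and follows essentially the same route as the paper, which simply invokes the arguments of Browning--Elsholtz and Elsholtz--Planitzer with the new exponent $\nicefrac{8}{5}$ in place of $\nicefrac{28}{17}$: part (1) is the direct specialization $m=n=1$ of Theorem~\ref{thm:kgeq5UFThm}, part (2) is the standard split into two half-blocks combined with the Curtiss/Sylvester bound $r_i\ge 1/u_{i-1}$ and $\prod_{i<j}(u_i+1)=u_j$ to control the block denominators, and part (3) reduces to part (2) with $k+1$ summands. Your arithmetic $(\nicefrac{8}{5})\cdot 2\cdot 2^{k-5}=\nicefrac{2^k}{10}$ per block, hence $c_0^{(\nicefrac{1}{5})2^k}=c_0^{(\nicefrac{2}{5})2^{k-1}}$ overall, reproduces exactly the stated constants.
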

\begin{remark}
  The sequence $u_n$, starting with $1,2,6,42,1806, \ldots$ is listed as A007018 in the
  online encyclopedia of integer seqeuences (OEIS), and is a shifted copy
  of the well known
  Sylvester sequence (A000058 of the OEIS): $2,3,7,43,1807, \ldots$ It is
  known that the limit $c_0=\lim_{n\rightarrow
    \infty}u_n^{2^{-n}}=1.5979102\ldots$ exists and is irrational, for details see
  \cite{Aho-Sloane} and \cite{Wagner-Ziegler}.
  Graham, Knuth and Patashnik
  \cite[Exercise 4.37]{Graham-Knuth-Patashnik} sketch a proof of (in our notation) $u_n= \lfloor
  c_0^{2^n}- \frac{1}{2} \rfloor$.
  The existence of the limit can be proved directly, as it follows inductively
  that $u_n \leq \frac{2^{2^n}}{2}$, so that the sequence $q_n:=(u_n)^{1/2^n}$ is
bounded from above by $2$, and $u_{n+1}\geq u_n^2$ implies that
$(u_{n+1})^{1/2^{n+1}}\geq    (u_n)^{1/2^n}$, so that the sequence of the $q_n$
is also monotonically increasing.

\end{remark}

At the end of this introduction we want to comment on the most important aspects of the notation used in the following. The letters $\mathbb{N}$ and $\mathbb{P}$, as usual, denote the sets of positive integers and positive primes. The function $d(n)$ denotes the number of positive divisors of $n$. By $\nu_p(n)$, $p \in \mathbb{P}$, we denote the $p$-adic valuation of $n$, i.e.~the highest power of $p$ dividing $n$.  We use the symbols $\ll$ and $\mathcal{O}$ in the contexts of the well known Vinogradov- and Landau-notations. Dependencies of the implied constants on additional parameters will be indicated by a subscript.

\section{Patterns and parameters} \label{sec:param}

In this section, we introduce a method of parametrization for solutions of equation~\eqref{eq:generalUFeq} which is based on what we will call relative greatest common divisors and patterns. This type of parametrization has been used before in connection with sums of unit fractions. Elsholtz first used relative greatest common divisors as described below in~\cite{Elsholtz} while patterns played a role in proving results in~\cite{ElsholtzPlanitzer}. For a more thorough introduction to this method and for some historical comments see~\cite{Elsholtz} and~\cite{ElsholtzPlanitzer}.

We start by writing the denominators of the unit fractions on the right hand side of equation~\eqref{eq:generalUFeq} as $a_i=n_it_i$, where $n_i = \gcd(a_i,n)$. We note that by definition $\gcd\big(t_i,\frac{n}{n_i}\big)=1$ and for given $(a_1,\ldots, a_k)\in \mathbb{N}^k$ we call $(n_1, \ldots, n_k)\in \mathbb{N}^k$ the \textit{pattern} of the solution. To bound the number of patterns for given $n \in \mathbb{N}$, we make use of the classical divisor bound which was also one of the main ingredients in Elsholtz and Tao's proof of an upper bound for $f_3(4,p)$ in~\cite{ElsholtzTao}. We will use it in the following form (see~\cite{HardyWright}*{Theorem 315}).

\begin{customlem}A[Classical divisor bound] \label{lem:divisorBound}
Let $d(n)=\sum_{d|n}1$ be the number of positive divisors of an integer $n$. Then for any $\varepsilon >0$, we have
$$d(n) \ll_{\varepsilon} n^{\varepsilon}.$$
\end{customlem}

When trying to find upper bounds on $f_4(m,n)$, we can consider the pattern of the solutions to be fixed, since the upper bound we will establish is independent of the pattern. Lemma~\ref{lem:divisorBound} tells us, that we have at most $\mathcal{O}_{\varepsilon}(n^{\epsilon})$ such patterns and when looking at the result in Theorem~\ref{thm:4UFThm} we see that an additional factor of $n^{\varepsilon}$ does not change the upper bound there. Hence from now on we consider the pattern $(n_1,n_2,n_3,n_4)$ to be fixed. 

Note that the trivial upper bound for the number of patterns would rather be of order $n^{4\varepsilon}$ and to get the above bound we need to redefine $\varepsilon$. Also below we will often apply the divisor bound several times in a row to conclude, that there are at most of order $n^{\varepsilon}$ choices for some parameters. In any such situation this upper bound is achieved after possibly redefining $\varepsilon$, and we will not explicitly state this henceforth.  

Next we set $I=\{1, \ldots, k\}$ to be the index set and write the factors $t_i$ as a product of what we want to call relative greatest common divisors denoted by $x_J$, $J=\{i_1, \ldots, i_{|J|}\}  \subset I$. Here we recursively define these relative greatest common divisors $x_J$ as follows:
$$x_I=\gcd(t_1, \ldots, t_k) \text{ and } x_J =\frac{\gcd(t_{i_1}, \ldots, t_{i_{|J|}})}{\prod_{J \subsetneq K}x_K} \text{ for } J \subsetneq I.$$
With this definition, we have
$$t_i=\prod_{\substack{J \subset I \\ i \in J}}x_J \text{ for } 1 \leq i \leq k$$
and it is easy to see that 
\begin{equation} \label{eq:RGCDCoprimalityeqn}
\gcd(x_J,x_K)=1 \text{ whenever } J \nsubseteq K \text{ and } K \nsubseteq J.
\end{equation}
See e.g.~\cite{ElsholtzPlanitzer} for a short proof of the last statement.

To keep things readable, and since in the cases we use it no ambiguity will arise, below we will often resort to the following simplified notation. If $J=\{i_1, \ldots, i_{|J|}\}$ and the $i_j$ are given in increasing order, then we write
$$x_J=x_{i_1i_2\ldots i_{|J|}}.$$

We now apply this parametrization and patterns in the special case of sums of $4$ unit fractions, i.e. equation~\eqref{eq:generalUFeq} with $k=4$:
$$\frac{m}{n}=\frac{1}{a_1}+ \cdots +\frac{1}{a_4},$$
where $a_1 \leq \ldots \leq a_4$. Let $(n_1,\ldots, n_4)$ be our fixed pattern and thus $a_i=n_it_i$ for $1 \leq i \leq 4$.

We use relative greatest common divisors and the fixed pattern to write
\begin{equation}\label{eq:initial4UFeq}
\begin{split}
\frac{m}{n}=&\frac{1}{n_1x_1x_{12}x_{13}x_{14}x_{123}x_{124}x_{134}x_{1234}}+\frac{1}{n_2x_2x_{12}x_{23}x_{24}x_{123}x_{124}x_{234}x_{1234}}+ \\
&\frac{1}{n_3x_3x_{13}x_{23}x_{34}x_{123}x_{134}x_{234}x_{1234}}+\frac{1}{n_4x_4x_{14}x_{24}x_{34}x_{124}x_{134}x_{234}x_{1234}}.
\end{split}
\end{equation}
Next we multiply the last equation by $n$ and the least common denominator of the unit fractions on the right hand side. Note that after doing so, the variable $x_i$, for $1\leq i\leq 4$, appears in exactly three of the four summands on the right hand side and in the product on the left hand side. This means that also the fourth summand on the right hand side, of which $x_i$ is not a factor, has to be divisible by $x_i$. This factor is of the form
$$\frac{n}{n_i}\prod_{\substack{J \subset I \\ i \not\in J}}x_J,$$
where we use the set-index notation for
convenience. By~\eqref{eq:RGCDCoprimalityeqn}, $x_i$ is coprime to
$\prod_{\substack{J \subset I \\ i \not\in J}}x_J$. Furthermore, by the
definition of a pattern, we also have $\gcd\big(x_i,\frac{n}{n_i}\big)=1$, which leaves $x_i=1$ for $1 \leq i \leq 4$. With this simplification we get
\begin{equation}\label{eq:initial4UFeqmult}
\begin{split}
mx_{12}x_{13}x_{14}x_{23}x_{24}x_{34}x_{123}x_{124}x_{134}x_{234}x_{1234}=&\frac{n}{n_1}x_{23}x_{24}x_{34}x_{234}+\frac{n}{n_2}x_{13}x_{14}x_{34}x_{134}+\\
&\frac{n}{n_3}x_{12}x_{14}x_{24}x_{124}+\frac{n}{n_4}x_{12}x_{13}x_{23}x_{123}.
\end{split}
\end{equation}

We introduce the parameters $d_{\{i,j\}}=d_{ij}=\gcd\big( \frac{n}{n_i},\frac{n}{n_j}\big)$ for $1\leq i<j\leq 4$ and $d_{\{i,j,k\}}=d_{ijk}=\gcd\big( \frac{n}{n_i},\frac{n}{n_j},\frac{n}{n_k} \big)$ for $1 \leq i<j<k \leq 4$ and we note that they are fixed by the pattern $(n_1,\ldots, n_4)$. Furthermore, again by definition of a pattern, we have that $d_{ij}$ is coprime to all relative greatest common divisors with an $i$ or a $j$ in the index. The same holds true for $d_{ijk}$ and relative greatest common divisors with an $i$, $j$ or $k$ in the index. 

In~\cite{BrowningElsholtz},~\cite{ElsholtzPlanitzer} and~\cite{ElsholtzTao} it turned out to be useful to consider divisibility relations in the equation corresponding to~\eqref{eq:initial4UFeqmult} in the three unit fractions case. We will also do this and define the following integer parameters:

\begin{equation} \label{eq:zijdef}
\begin{split}
z_{23}&=\frac{\frac{n}{n_2d_{23}}x_{13}x_{34}x_{134}+\frac{n}{n_3d_{23}}x_{12}x_{24}x_{124}}{x_{23}}\\
z_{34}&=\frac{\frac{n}{n_3d_{34}}x_{14}x_{24}x_{124}+\frac{n}{n_4d_{34}}x_{13}x_{23}x_{123}}{x_{34}}
\end{split}
\end{equation} 
\begin{equation} \label{eq:zijkdef}
\begin{split}
z_{123}&=\frac{\frac{n}{n_1d_{123}}x_{23}x_{24}x_{34}x_{234}+\frac{n}{n_2d_{123}}x_{13}x_{14}x_{34}x_{134}+\frac{n}{n_3d_{123}}x_{12}x_{14}x_{24}x_{124}}{x_{12}x_{13}x_{23}x_{123}}\\
z_{134}&=\frac{\frac{n}{n_1d_{134}}x_{23}x_{24}x_{34}x_{234}+\frac{n}{n_3d_{134}}x_{12}x_{14}x_{24}x_{124}+\frac{n}{n_4d_{134}}x_{12}x_{13}x_{23}x_{123}}{x_{13}x_{14}x_{34}x_{134}}\\
z_{234}&=\frac{\frac{n}{n_2d_{234}}x_{13}x_{14}x_{34}x_{134}+\frac{n}{n_3d_{234}}x_{12}x_{14}x_{24}x_{124}+\frac{n}{n_4d_{234}}x_{12}x_{13}x_{23}x_{123}}{x_{23}x_{24}x_{34}x_{234}}.
\end{split}
\end{equation}

In the following we will only use the parameters $z_J$ defined above. For a general definition of $z_J$, $J \subset \{1, \ldots, 4\}$, $2 \leq |J| \leq 3$, see Section~\ref{sec:comp}.

\section{Defining sets for sums of four unit fractions} \label{sec:definingSets}

In this section, we will determine several defining sets for sums of four unit fractions. We define these sets in the following way.

\begin{definition}
Let $m,n \in \mathbb{N}$, $(n_1,\ldots, n_4)\in \mathbb{N}^4$ be a fixed pattern, $I=\{1, \ldots, 4\}$ and $\mathcal{P}=\mathcal{X}\cup \mathcal{Z}$, where
\begin{equation} \label{def:setsXandZ}
\mathcal{X}=\big\{x_J: J\subset I, |J| \geq 2\big\} \text{ and } \mathcal{Z}=\{z_J: J \subset I, 2\leq |J| \leq 3\}
\end{equation}
are the sets of parameters introduced in Section~\ref{sec:param}. We call a set $S\subset P$ a (four unit fractions) defining set, if assigning a positive integer value to every parameter in $S$ allows for at most $\mathcal{O}_{\varepsilon}(n^{\varepsilon})$ positive integer assignments to variables in $\mathcal{X}\backslash S$ such that
$$\frac{m}{n}=\sum_{i=1}^4\frac{1}{n_i\prod_{\substack{J \subset I \\ i \in J \\ |J| \geq 2}}x_J}.$$ 
\end{definition}

Note that the idea behind the ``defining sets'' was already applied in~\cite{ElsholtzTao}*{Section 3} and~\cite{ElsholtzPlanitzer} when dealing with sums of three unit fractions (in~\cite{ElsholtzPlanitzer} actually also in the four unit fractions case, but to a very limited extent). Since the larger number of parameters in the four unit fractions case leads to a lot more possibilities for defining sets than we had when dealing with sums of three unit fractions, it seems impractical to determine these sets by hand. In Section~\ref{sec:comp}, we describe how we computed many defining sets via a structured approach using a computer algebra system. Any of these new defining sets can easily be verified by hand. In particular, we will prove the following Lemma, which covers only the defining sets used to prove Theorem~\ref{thm:4UFThm}.

\begin{lemma} \label{lem:definingSetLemma}
The following sets are four unit fractions defining sets:

\begin{multicols}{2}
\begin{enumerate}
\item $\{z_{23}, z_{234}\}$,
\item $\{z_{234},x_{23},x_{24}\}$,
\item $\{z_{234},x_{23},x_{234}\}$,
\item $\{z_{34},x_{12},x_{123},x_{124},x_{1234}\}$,
\item $\{x_{12},x_{13},x_{24},x_{34},x_{123},x_{124},x_{134},x_{1234}\}$,
\item $\{x_{12},x_{13},x_{14},x_{23},x_{123},x_{124},x_{134},x_{234},x_{1234}\}$.
\end{enumerate}
\end{multicols}
\end{lemma}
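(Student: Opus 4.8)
The general strategy is the same for all six sets: once the parameters in the defining set $S$ are fixed to positive integer values, I want to show that the equation~\eqref{eq:initial4UFeqmult} together with the size constraint (all summands being positive and the whole sum equal to $\frac mn$) forces the remaining relative gcd's in $\mathcal{X}\setminus S$ to lie in a set of size $\mathcal{O}_\varepsilon(n^\varepsilon)$. The two tools I will repeatedly combine are: (a) the coprimality relations~\eqref{eq:RGCDCoprimalityeqn} among the $x_J$ together with coprimality of the $d_{ij}, d_{ijk}$ to relevant $x_J$'s, which let me read off from an equation like~\eqref{eq:zijdef} or~\eqref{eq:zijkdef} that a specific product of $x_J$'s \emph{divides} a quantity that is already determined by $S$; and (b) Lemma~\ref{lem:divisorBound}, which turns ``$P \mid N$ with $N$ fixed'' into ``$\mathcal{O}_\varepsilon(n^\varepsilon)$ choices for the factorisation of $P$'', hence $\mathcal{O}_\varepsilon(n^\varepsilon)$ choices for each $x_J$ in the product. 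I will also use the trivial observation that since all four unit fractions are positive and sum to $m/n$, each denominator $n_i t_i$ is at most $n/m \le n$, so every individual $x_J \le n$; thus once I know $x_J \mid N$ for a fixed $N \le n^{O(1)}$, I get the divisor bound with $\varepsilon$ rescaled.

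Concretely, I would proceed set by set, from the ``largest'' (item (6), nine parameters) to the ``smallest'' (item (1), two parameters). For item (6), $S=\{x_{12},x_{13},x_{14},x_{23},x_{123},x_{124},x_{134},x_{234},x_{1234}\}$: the only parameters not in $S$ are $x_{24}$ and $x_{34}$, and I plug the fixed values into~\eqref{eq:initial4UFeqmult}. Using the defining equation~\eqref{eq:zijdef} for $z_{34}$ is cleaner: $x_{34}$ divides $\frac{n}{n_3 d_{34}}x_{14}x_{24}x_{124}+\frac{n}{n_4 d_{34}}x_{13}x_{23}x_{123}$; but actually the direct route is to observe that in~\eqref{eq:initial4UFeqmult} the variable $x_{24}$ is missing from the term $\frac{n}{n_2}x_{13}x_{14}x_{34}x_{134}$, while $x_{24}$ divides the left side and the other three right-hand terms — no wait, it appears on the left. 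Let me instead use the structure as for the $x_i$: rearranging~\eqref{eq:initial4UFeqmult} and using~\eqref{eq:RGCDCoprimalityeqn}, $x_{24}$ divides $\frac{n}{n_2}x_{13}x_{14}x_{34}x_{134}$, and since $\gcd(x_{24},x_{13})=\gcd(x_{24},x_{14})=\gcd(x_{24},x_{134})=\gcd(x_{24},n/n_2)=1$ we get $x_{24}\mid x_{34}$; symmetrically $x_{34}\mid x_{24}$ after handling the $\frac{n}{n_3}$-term, so in fact $x_{24}$ and $x_{34}$ are tightly coupled, and then~\eqref{eq:initial4UFeqmult} becomes, after dividing by the common factor, a \emph{linear} equation in one unknown with fixed coefficients, which has at most one solution. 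For items (2)--(5), which leave two or three $x_J$'s free, I will similarly use the $z_J$ equations: e.g. for (4), fixing $z_{34}$ plus $x_{12},x_{123},x_{124},x_{1234}$, equation~\eqref{eq:zijdef} for $z_{34}$ reads $z_{34}x_{34}=\frac{n}{n_3 d_{34}}x_{14}x_{24}x_{124}+\frac{n}{n_4 d_{34}}x_{13}x_{23}x_{123}$ and together with the full equation~\eqref{eq:initial4UFeqmult} one eliminates enough variables; the coprimality of the remaining $x_J$'s with the coefficients lets me extract divisibility statements $x_J\mid N(S)$ for each, and the divisor bound finishes it.

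For items (1)--(3), which involve only the ``large'' parameter $z_{234}$ (and at most two $x_J$'s), the mechanism is: $z_{234}$ as defined in~\eqref{eq:zijkdef} has a numerator that is a sum of three terms each divisible by a high power of the $x_J$'s appearing in the denominator $x_{23}x_{24}x_{34}x_{234}$; fixing $z_{234}$ pins down this combination, and then one plays the terms against each other. I expect item (1), $\{z_{23},z_{234}\}$, to be the \textbf{main obstacle}: with only two parameters fixed there is the most freedom, and I will need to combine the $z_{23}$-equation and the $z_{234}$-equation, use~\eqref{eq:RGCDCoprimalityeqn} to cancel coprime factors, and argue that the surviving relation forces a divisibility $x_{24}x_{34}x_{234}\mid (\text{something fixed})$ of size $\le n^{O(1)}$ — or, if that fails directly, iterate the divisor bound once more on a derived equation. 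The crux in every case is choosing \emph{which} two of the four terms of~\eqref{eq:initial4UFeqmult} (or of a $z_J$ numerator) to pair up so that the coprimality relations are maximally exploited; I anticipate that for (1) one needs two successive applications of Lemma~\ref{lem:divisorBound} rather than one. Once each of the six is verified, the lemma is proved; the claim at the end of the excerpt (that these verifications are routine by hand) is then justified.
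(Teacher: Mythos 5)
There is a genuine gap, in fact two. First, your argument for item (6) is incorrect. The variable $x_{24}$ appears in \emph{two} of the four summands on the right-hand side of~\eqref{eq:initial4UFeqmult} (namely those indexed by $1$ and $3$), not in three of them as happens for the singleton variables $x_i$. So the divisibility you can extract is only $x_{24}\mid \frac{n}{n_2}x_{13}x_{14}x_{34}x_{134}+\frac{n}{n_4}x_{12}x_{13}x_{23}x_{123}$, a divisibility of a \emph{sum} of two terms, not of the single term $\frac{n}{n_2}x_{13}x_{14}x_{34}x_{134}$. Your conclusion $x_{24}\mid x_{34}$ cannot be salvaged: by~\eqref{eq:RGCDCoprimalityeqn} we have $\gcd(x_{24},x_{34})=1$, so $x_{24}\mid x_{34}$ would force $x_{24}=1$, which is false in general. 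The step that actually works (and which the paper takes from Browning--Elsholtz) is to note that with everything else fixed, \eqref{eq:initial4UFeqmult} has the shape $C_1x_{24}x_{34}=C_2x_{24}+C_3x_{34}+C_4$ with fixed integers $C_i$, hence $(C_1x_{24}-C_3)(C_1x_{34}-C_2)=C_1C_4+C_2C_3$, and the divisor bound applied to the fixed right-hand side gives $\mathcal{O}_\varepsilon(n^\varepsilon)$ choices for the pair. This factoring identity is also what finishes items (3) and (5); your proposal never reaches it.

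Second, for items (1)--(4) the decisive identities are absent from your plan. The engine of the paper's proof is the relation $m\,x_{12}x_{13}x_{14}x_{123}x_{124}x_{134}x_{1234}=d_{234}z_{234}+\frac{n}{n_1}$ (and its analogue with $z_{123}$): fixing $z_{234}$ \emph{alone} fixes the right-hand side, so the divisor bound already gives $\mathcal{O}_\varepsilon(n^\varepsilon)$ choices for \emph{all} relative greatest common divisors carrying a $1$ in the index. After that, identities such as $z_{234}x_{24}x_{34}x_{234}=\frac{d_{23}}{d_{234}}x_{14}z_{23}+\frac{n}{n_4d_{234}}x_{12}x_{13}x_{123}$ and $z_{134}z_{234}=\frac{n^2}{n_1n_2d_{134}d_{234}}+\frac{n^2d_{34}}{d_{134}d_{234}}z_{34}x_{12}^2x_{123}x_{124}x_{1234}$ dispose of the remaining variables in items (1)--(4). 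You correctly identify item (1) as the crux but offer only a conditional plan (``or, if that fails directly, iterate''), which is precisely the point where a concrete identity is needed rather than a hope; without the relation expressing $m\prod_{J\ni 1}x_J$ in terms of $z_{234}$ alone, fixing the two parameters $z_{23},z_{234}$ gives you no foothold. The overall framework you describe (coprimality plus divisor bound, iterated) is the right one, but as written the proof does not go through for items (1)--(4) and contains a false deduction in item (6).
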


\begin{proof}

With the help of equations~(\ref{eq:initial4UFeqmult}-\ref{eq:zijkdef}) we derive the following set of equations:
\begin{align}
mx_{14}x_{24}x_{34}x_{124}x_{134}x_{234}x_{1234}&=d_{123}z_{123}+\frac{n}{n_4}\label{eq:systemeq1}, \\
mx_{12}x_{13}x_{14}x_{123}x_{124}x_{134}x_{1234}&=d_{234}z_{234}+\frac{n}{n_1}\label{eq:systemeq2}, \\
z_{23}x_{23}&=\frac{n}{n_2d_{23}}x_{13}x_{34}x_{134}+\frac{n}{n_3d_{23}}x_{12}x_{24}x_{124}\label{eq:systemeqn1},\\
z_{34}x_{34}&=\frac{n}{n_3d_{34}}x_{14}x_{24}x_{124}+\frac{n}{n_4d_{34}}x_{13}x_{23}x_{123}\label{eq:systemeq3}, \\
z_{234}x_{24}x_{34}x_{234}&=\frac{d_{23}}{d_{234}}x_{14}z_{23}+\frac{n}{n_4d_{234}}x_{12}x_{13}x_{123}\label{eq:systemeq4}, \\
z_{234}x_{23}x_{24}x_{234}&=\frac{d_{34}}{d_{234}}x_{12}z_{34}+\frac{n}{n_2d_{234}}x_{13}x_{14}x_{134}\label{eq:systemeq5}, \\
z_{134}z_{234} &= \frac{n^2}{n_1n_2d_{134}d_{234}}+\frac{n^2d_{34}}{d_{134}d_{234}}z_{34}x_{12}^2x_{123}x_{124}x_{1234} \label{eq:systemeq6}.
\end{align}
The method of proof will be as follows. We show that fixing positive integer values for the parameters in the sets in the statement of the lemma fixes the right hand side of at least one of the equations~(\ref{eq:systemeq1}--\ref{eq:systemeq6}). From the divisor bound in Lemma~\ref{lem:divisorBound} we may then deduce that we have at most of order $n^{\epsilon}$ choices for the variables on the left hand side of the corresponding equation. For any of these choices of new parameters we may then iterate the argument.

Here we note that the right hand sides of equations~(\ref{eq:systemeq1}--\ref{eq:systemeq6}) are at most of polynomial sizes in $n$. By definition, the parameters $d_J$, $J \subset \{1,\ldots, 4\}$, $2\leq |J|\leq 3$, are bounded from above by $n$. If we have a look at the definition of the parameters in the set $\mathcal{Z}$ in~\eqref{def:setsXandZ}, we see that they are certainly of size at most polynomial in $n$, if the same is true for the parameters in the set $\mathcal{X}$. To see that the relative greatest common divisors in $\mathcal{X}$ are of size at most polynomial in $n$, we use the fact that any of them is a factor of at least two of the denominators $a_i$, $1 \leq i \leq 4$. In particular, if we have $\frac{m}{n}=\frac{1}{a_1}+\cdots +\frac{1}{a_4}$ with $0<a_1 \leq \ldots \leq a_4$, then
$$\frac{m}{n}\leq \frac{4}{a_1} \text{ and } a_1 \leq \frac{4n}{m}.$$
With a similar argument we get
$$\frac{m}{n}-\frac{1}{a_1}=\frac{ma_1-n}{na_1} \leq \frac{3}{a_2} \text{ and } a_2 \leq 3na_1 \leq \frac{12n^2}{m}.$$
Finally we derive from the last two inequalities
$$\frac{m}{n}-\frac{1}{a_1}-\frac{1}{a_2}=\frac{ma_1a_2-na_1-na_2}{na_1a_2}\leq \frac{2}{a_3} \text{ and } a_3 \leq 2na_1a_2 \leq \frac{96n^4}{m^2}.$$
We now go through all defining sets in the statement of the Lemma.
\begin{enumerate}
\item Once we fix positive integer values for $z_{23}$ and $z_{234}$, we deduce from equation~\eqref{eq:systemeq2}, that we have at most of order $n^{\varepsilon}$ may choices for all relative greatest common divisors with a `$1$' in the index. Equation~\eqref{eq:systemeq4} then implies the same for the variables $x_{24}$, $x_{34}$ and $x_{234}$. Finally, the missing variable $x_{23}$ is uniquely determined by~\eqref{eq:initial4UFeqmult}.

\item We now consider $z_{234}$, $x_{23}$ and $x_{24}$ to be fixed. Again we have at most of order $n^{\varepsilon}$ choices for all relative greatest common divisors with a `$1$' in the index by~\eqref{eq:systemeq2}. Now the same holds true for the parameters $z_{34}$ and $x_{34}$ by equation~\eqref{eq:systemeq3}. Via equation~\eqref{eq:systemeq5} we deduce that we have at most of order $n^{\varepsilon}$ choices for the missing parameters $x_{23}$, $x_{24}$ and $x_{234}$.  

\item Having assigned positive integer values to the parameters $z_{234}$, $x_{23}$ and $x_{234}$, we again use equation~\eqref{eq:systemeq4} to deduce, that we have at most of order $n^{\varepsilon}$ many choices for all parameters with a `1' in the index. Now only assignments for the parameters $x_{24}$ and $x_{34}$ are missing. 

To see that we also have at most of order $n^{\varepsilon}$ many choices for these two parameters, we will apply a method of factoring equation~\eqref{eq:initial4UFeqmult} which was already used by Browning and Elsholtz~\cite{BrowningElsholtz}. As two of the five terms of equation~\eqref{eq:initial4UFeqmult} contain the factor $x_{24}x_{34}$, it may be rewritten in the form
$$C_1x_{24}x_{34}=C_2x_{24}+C_3x_{34}+C_4$$
and further
$$(C_1x_{24}-C_3)(C_1x_{34}-C_2)=C_1C_4+C_2C_3,$$
where the constants $C_i$, $1 \leq i \leq 4$, depend only on relative greatest common divisors $x_J$ which are known. The last equation implies that also in this case, for the remaining parameters $x_{24}$ and $x_{34}$ we have at most of order $n^{\varepsilon}$ many choices. \label{method:factoring}

\item In the case of $z_{34}$, $x_{12}$, $x_{123}$, $x_{124}$ and $x_{1234}$ being fixed, we see that we have at most of order $n^{\varepsilon}$ choices for the parameters $z_{134}$ and $z_{234}$ by equation~\eqref{eq:systemeq6}. From equations~\eqref{eq:systemeq1} and~\eqref{eq:systemeq2} we now see that we have at most of order $n^{\varepsilon}$ choices for $x_{13}$, $x_{14}$, $x_{24}$, $x_{34}$, $x_{134}$, and $x_{234}$ . This last parameter, $x_{23}$, is finally uniquely determined by~\eqref{eq:initial4UFeqmult}. 

\item If all the parameters $x_{12}$, $x_{13}$, $x_{24}$, $x_{34}$, $x_{123}$, $x_{124}$, $x_{134}$ and $x_{1234}$ are fixed, we see from equation~\eqref{eq:systemeqn1} that we have of order $n^{\varepsilon}$ choices for the parameter $x_{23}$. Now only the parameters $x_{14}$ and $x_{234}$ are missing. At this point we again use that equation~\eqref{eq:initial4UFeqmult} factors. Indeed, we may rearrange this equation to take the form
$$C_1x_{14}x_{234}=C_2x_{14}+C_3x_{234}+C_4,$$
where $C_1$, $C_2$, $C_3$ and $C_4$ are integer constants. This equation factors as in point~\eqref{method:factoring}, which leads to at most $\mathcal{O}_{\varepsilon}(n^{\varepsilon})$ choices for $x_{14}$ and $x_{234}$. \label{method:factoring2}

\item We now deal with the case when $x_{12}$, $x_{13}$, $x_{14}$, $x_{23}$, $x_{123}$, $x_{124}$, $x_{134}$, $x_{234}$ and $x_{1234}$ are all fixed. Note that only the two variables $x_{24}$ and $x_{34}$ are missing out. We already proved in point~\eqref{method:factoring} that in this case we have at most of order $n^{\varepsilon}$ many choices for these two parameters.

\end{enumerate}

\end{proof}

\section{Upper bounds on sums of 4 unit fractions} \label{sec:UF4}

In this section, we apply the parametrization introduced in Section~\ref{sec:param} and defining sets from Section~\ref{sec:definingSets} together with ideas from~\cite{ElsholtzTao}*{Section 3} and~\cite{ElsholtzPlanitzer} to prove Theorem~\ref{thm:4UFThm}. Recall, that with a fixed pattern all variables $n_i$, $d_{ij}$ and $d_{ijk}$ are fixed for $1\leq i,j,k \leq 4$ and we have $\mathcal{O}_{\epsilon}(n^{\epsilon})$ patterns altogether.

We now use the fact that the denominators $a_i=n_it_i$ are given in increasing order. The inequalities $a_2 \leq a_3$ and $a_3\leq a_4$ may be rewritten as
$$x_{12}x_{24}x_{124} \leq \frac{n_3}{n_2}x_{13}x_{34}x_{134}, \quad x_{13}x_{23}x_{123}\leq \frac{n_4}{n_3}x_{14}x_{24}x_{124},$$
by just plugging in the corresponding products of relative greatest common divisors for the $t_i$, $2\leq i\leq 4$. Combining these last inequalities with three of the equations in \eqref{eq:zijdef} and \eqref{eq:zijkdef} yields
\begin{align}
z_{23}x_{23} &\leq \frac{2n}{n_2d_{23}}x_{13}x_{34}x_{134}\label{eq:x23ineq},\\
z_{34}x_{34} &\leq \frac{2n}{n_3d_{34}}x_{14}x_{24}x_{124}\label{eq:x34ineq},\\
z_{234}x_{23}x_{24}x_{234} &\leq \frac{3n}{n_2d_{234}}x_{13}x_{14}x_{134}\label{eq:x234ineq}.
\end{align}
Furthermore, since the denominators $a_i$ are given in ascending order, we deduce from $\frac{m}{n}=\frac{1}{n_1t_1}+\frac{1}{n_2t_2}+\frac{1}{n_3t_3}+\frac{1}{n_4t_4}$ that $\frac{m}{n}\leq \frac{4}{n_1t_1}$ and hence
\begin{equation}\label{eq:t1ineq}
t_1=x_{12}x_{13}x_{14}x_{123}x_{124}x_{134}x_{1234} \leq \frac{4n}{n_1m}.
\end{equation}

We now prove the two upper bounds in Theorem~\ref{thm:4UFThm} separately. We start with the upper bound of order $n^{\varepsilon}\big(\frac{n^{\nicefrac{3}{2}}}{m^{\nicefrac{3}{4}}}\big)$.

From inequalities~(\ref{eq:x34ineq}--\ref{eq:t1ineq}) we deduce
\begin{equation} \label{eq:23expEq}
\begin{split}
&(z_{234}x_{23}x_{234})^2(z_{34}x_{12}x_{123}x_{124}x_{1234})(x_{12}x_{13}x_{24}x_{34}x_{123}x_{124}x_{134}x_{1234})(x_{12}x_{123}x_{1234}) = \\
&\frac{z_{34}x_{34}}{x_{14}x_{24}x_{124}}\bigg(\frac{z_{234}x_{23}x_{24}x_{234}}{x_{13}x_{14}x_{134}}\bigg)^2(x_{12}x_{13}x_{14}x_{123}x_{124}x_{134}x_{1234})^3 \ll \frac{n^6}{m^3n_1^3n_2^2n_3d_{34}d_{234}^2}\ll \frac{n^6}{m^3}.
\end{split}
\end{equation} 

Note that any of the factors in parentheses on the left hand side of this inequality, except for the factor $(x_{12}x_{123}x_{1234})$ is a product of parameters constituting one of the defining sets in Lemma~\ref{lem:definingSetLemma}. After distributing the exceptional factor among the others, we see that we have $4$ factors left and that at least one of them is bounded in size by $\mathcal{O}\big(\frac{n^{\nicefrac{3}{2}}}{m^{\nicefrac{3}{4}}}\big)$. Once the bounded factor is fixed we have at most of order $n^{\varepsilon}$ many choices for the corresponding defining set and thus an upper bound of order $\mathcal{O}\big(n^{\varepsilon}\frac{n^{\nicefrac{3}{2}}}{m^{\nicefrac{3}{4}}}\big)$ for the number of choices for all parameters.

Finally, to prove the upper bound of order $n^{\varepsilon}\big(\frac{n^{\nicefrac{8}{5}}}{m}\big)$, from inequalities~(\ref{eq:x23ineq}-\ref{eq:t1ineq}) we derive
\begin{equation}\label{eq:factoringineq}
\begin{split}
&(z_{23}z_{234})(z_{234}x_{23}x_{24})(z_{34}x_{12}x_{123}x_{124}x_{1234})(x_{12}x_{13}x_{14}x_{23}x_{123}x_{124}x_{134}x_{234}x_{1234})^2\times \\
&(x_{12}^2x_{123}^2x_{124}x_{1234}^2) = \frac{z_{23}x_{23}}{x_{13}x_{34}x_{134}}\frac{z_{34}x_{34}}{x_{14}x_{24}x_{124}}\left(\frac{z_{234}x_{23}x_{24}x_{234}}{x_{13}x_{14}x_{134}}\right)^2\times \\
&(x_{12}x_{13}x_{14}x_{123}x_{124}x_{134}x_{1234})^5 \ll \frac{n^9}{m^5n_1^5n_2^3n_3d_{23}d_{34}d_{234}^2} \ll \frac{n^8}{m^5}.
\end{split}
\end{equation}

For the last inequality we note that by definition we have $d_{23}=\prod_{p \in \mathbb{P}}p^{\nu_p(n)-\max\{\nu_p(n_2),\nu_p(n_3)\}}$, where $\nu_p$ denotes the $p$-adic valuation. Hence,
\begin{equation*}
n_2n_3d_{23} = \prod_{p\in \mathbb{P}}p^{\nu_p(n_2)+\nu_p(n_3)+\nu_p(n)-\max\{\nu_p(n_2),\nu_p(n_3)\}}\geq \prod_{p\in \mathbb{P}}p^{\nu_p(n)}= n.
\end{equation*}

By Lemma~\ref{lem:definingSetLemma} any of the factors in parentheses on the very left hand side of~\eqref{eq:factoringineq}, with exception of the factor $(x_{12}^2x_{123}^2x_{124}x_{1234}^2)$, is a product of parameters forming a defining set. Hence, if we fix any of these factors, by Lemma~\ref{lem:divisorBound} we have at most $\mathcal{O}_{\varepsilon}(n^{\varepsilon})$ choices for the corresponding defining set, and thus also at most $\mathcal{O}_{\varepsilon}(n^{\varepsilon})$ choices for all relative greatest common divisors. After distributing the variables of the exceptional factor among the other ones, we conclude that at least one of the remaining factors is bounded from above by $\mathcal{O}\big(\frac{n^{\nicefrac{8}{5}}}{m}\big)$ which gives an upper bound of $\mathcal{O}_{\varepsilon}\big(n^{\varepsilon}\frac{n^{\nicefrac{8}{5}}}{m}\big)$ for the number of solutions of~\eqref{eq:initial4UFeq} altogether.

It may seem a bit mysterious how the equations~\eqref{eq:23expEq} and~\eqref{eq:factoringineq} were found. In Section~\ref{sec:comp}, we describe how we used a computer programme to list many suitable inequalities of this type based on a precomputed list of defining sets. From a list of given inequalities we have chosen the best ones we found. 

\section{Upper bounds on sums of $k \geq 5$ unit fractions} \label{sec:UF5}

In this section, we prove Theorem~\ref{thm:kgeq5UFThm}. We do so by applying a lifting method by Browning and Elsholtz~\cite{BrowningElsholtz} to the result in Theorem~\ref{thm:4UFThm}.

We first derive the bound on $f_5(m,n)$ by summing our upper bound from Theorem~\ref{thm:4UFThm} over several choices of the smallest denominator $a_1$ in the decomposition. Here, we will only consider the bound $f_4(m,n) \ll_{\varepsilon} n^{\varepsilon}\frac{n^{\nicefrac{8}{5}}}{m}$. The reason for this is, that summing over the bound $f_4(m,n) \ll_{\varepsilon} n^{\varepsilon}\frac{n^{\nicefrac{3}{2}}}{m^{\nicefrac{3}{4}}}$ leads to worse upper bounds for $f_5(m,n)$ because the exponent of $m$ is too small. 

In particular, for given $a_1 \in \mathbb{N}$, we consider decompositions of $\frac{m}{n}-\frac{1}{a_1}=\frac{ma_1-n}{na_1}$ as a sum of four unit fractions. We set $ma_1-n =u$, and with the trivial bounds 
$$\frac{n}{m}<a_1\leq \frac{5n}{m},$$
we have
\begin{align*}
f_5(m,n) &\leq \sum_{0<u\leq 4n}f_4\bigg(u,n\frac{u+n}{m}\bigg)\ll_{\varepsilon}n^{\varepsilon}\sum_{0<u\leq 4n}\frac{\big(n\frac{u+n}{m}\big)^{\nicefrac{8}{5}}}{u}\\
&\ll_{\varepsilon} n^{\varepsilon}\bigg(\frac{n^2}{m}\bigg)^{\nicefrac{8}{5}}\sum_{0<u\leq 4n}\frac{1}{u}\ll_{\varepsilon}n^{\varepsilon}\bigg(\frac{n^2}{m}\bigg)^{\nicefrac{8}{5}}.
\end{align*}
We next use~\cite{ElsholtzPlanitzer}*{Lemma C} which summarizes the procedure used in~\cite{BrowningElsholtz}*{Section 4} to lift this upper bound on $f_5(m,n)$ to $f_k(m,n)$ for $k>5$. We give this result here as the following Lemma~\ref{lem:EBLifting}.
\begin{customlem}B \label{lem:EBLifting}
Suppose that there exists $c>1$ such that
$$f_5(m,n) \ll_{\varepsilon} n^{\varepsilon}\bigg(\frac{n^2}{m}\bigg)^{c}.$$
Then for any $k \geq 5$ we have
$$f_k(m,n) \ll_{\varepsilon} (kn)^{\varepsilon}\bigg(\frac{k^{\nicefrac{4}{3}}n^2}{m}\bigg)^{c2^{k-5}}.$$
\end{customlem}
Lemma~\ref{lem:EBLifting} together with our bound on $f_5(m,n)$ above proves Theorem~\ref{thm:kgeq5UFThm}.

\section{Computational aspects} \label{sec:comp}

Here, we describe how we found the proof of Theorem~\ref{thm:4UFThm}. To find inequalities of the type~\eqref{eq:23expEq} and~\eqref{eq:factoringineq} we used a computer algebra system. As stated earlier there are two stages at which computational aspects came into play, the first of which was finding many defining sets. Here we used $96$ equations of type (\ref{eq:systemeq1}-\ref{eq:systemeq6}). For subsets $S_i$, $0 \leq i \leq l$ of the set $\{x_J:J\subset \{1,2,3,4\}, |J|\geq 2\}\cup\{z_J:J\subset\{1,2,3,4\}, 2\leq |J|\leq 3\}$ any of these equations is of the form
\begin{equation} \label{eq:generaldefiningSetEqType}
c_0\prod_{p_J \in S_0}p_J = \sum_{i=1}^l c_i\prod_{p_J \in S_i}p_J,
\end{equation}
where the $c_i$, $0 \leq i \leq l$, are constants depending at most on $m$ and the pattern $(n_1, \ldots, n_4)$. In particular, Lemma~\ref{lem:divisorBound} tells us, that once we fix the parameters in the sets $S_1, \ldots, S_l$, we have at most of order $n^{\varepsilon}$ choices for the parameters in the set $S_0$. 

For a given subset $S$ of parameters we can now go through our $96$ equations and check whether for one of these
\begin{equation} \label{eq:UnionDefiningSetsEquation}
\bigcup_{i=1}^lS_i \subset S.
\end{equation}
If this is the case we add the parameters in $S_0\backslash S$ to $S$ and repeat the process. 

If at some point equation~\eqref{eq:UnionDefiningSetsEquation} does not yield any new parameters for any of the $96$ equations we stop the process. If the set of parameters we obtained in this fashion is the set of all parameters then the original set $S$ was a defining set.

It remains to discuss which equations of the form~\eqref{eq:generaldefiningSetEqType} our program used to find defining sets. We set $I=\{1,\ldots, 4\}$ and we consider the following $8$ types of equations.

\begin{enumerate}
\item The first type of equation arises from considering two of the relative greatest common divisors unknown. In this case equation~\eqref{eq:initial4UFeqmult} may be rearranged such that it factors in one of the following forms:
\begin{align*}
(C_1x_J-C_3)(C_1x_K-C_2)&=C_1C_4+C_2C_3 \\
x_J(C_5+C_6x_K)&=C_7,
\end{align*}
where $J, K \subset I$. This leads to $55$ equations.

\item Next, for $1\leq i<j\leq 4$ and $\{k,l\}=I\backslash \{i,j\}$, we define the integer parameters $z_{ij}$ in~\eqref{eq:zijdef} in a general way:
$$z_{ij}=\frac{\frac{n}{n_id_{ij}}\prod_{i \not\in J, j \in J}x_J+\frac{n}{n_jd_{ij}}\prod_{i\in J, j \not\in J}x_J}{x_{ij}x_{kl}}.$$
From this equation we see that fixing the parameters in the set 
$$\big\{x_J:J\subset\{1,2,3,4\}, (i \in J \wedge j \not \in J) \vee (i \not \in J \wedge j \in J), J\neq \{k,l\} \big\}$$
leads to at most of order $n^{\varepsilon}$ choices for $z_{ij}$ and $x_{ij}$ and, after multiplying with the denominator on the right hand side, to $6$ equations of type~\eqref{eq:generaldefiningSetEqType}. 

\item In addition to the equations corresponding to the parameters $z_{123}$, $z_{134}$ and $z_{234}$ in~\eqref{eq:zijkdef}, we used
$$z_{124}=\frac{\frac{n}{n_1d_{124}}x_{23}x_{24}x_{34}x_{234}+\frac{n}{n_2d_{124}}x_{13}x_{14}x_{34}x_{134}+\frac{n}{n_4d_{124}}x_{12}x_{13}x_{23}x_{123}}{x_{12}x_{14}x_{24}x_{124}}.$$
To get an equation of the form~\eqref{eq:generaldefiningSetEqType} we multiply with the denominator on the right hand side. 

\item Using the definition of $z_{ijk}$, $z_{ij}$, $z_{ik}$ and $z_{jk}$ and setting $l$ to be the single element in $I\backslash \{i,j,k\}$, we have
\begin{align*}
z_{ijk}x_{ij}x_{ik}x_{jk}x_{ijk}&=\frac{d_{ij}}{d_{ijk}} z_{ij}x_{kl}+\frac{n}{n_kd_{ijk}}x_{ij}x_{il}x_{jl}x_{ijl} \\
&=\frac{d_{ik}}{d_{ijk}} z_{ik}x_{jl}+\frac{n}{n_jd_{ijk}}x_{ik}x_{il}x_{kl}x_{ikl} \\
&=\frac{d_{jk}}{d_{ijk}} z_{jk}x_{il}+\frac{n}{n_id_{ijk}}x_{jk}x_{jl}x_{kl}x_{jkl}.
\end{align*}
This leads to twelve equations of type~\eqref{eq:generaldefiningSetEqType}.

\item By definition of the parameters $z_{ijk}$ we may write down the general form of equations \eqref{eq:systemeq1} and \eqref{eq:systemeq2}:
$$m\prod_{\substack{J \subset I \\ l \in J}}x_J = d_{ijk}z_{ijk}+\frac{n}{n_l},$$
where $l$ is the single element in the set $I \backslash \{i,j,k\}$. This leads to $4$ equations and we get that fixing the parameter $z_{ijk}$ leads to at most of order $n^{\varepsilon}$ choices for the parameters in the set
$$\big\{x_J: J \subset I, l\in J\big\}.$$

\item Using just the definition of the $z_{ij}$, we derive $6$ equations of the following form:
$$m\prod_{\substack{J \subset I \\ J \neq \{i,j\}}}x_J = d_{ij}x_{kl}z_{ij}+\frac{n}{n_k}\prod_{\substack{J \subset I \\ k \not\in J \\ J \neq \{i,j\}}}x_J+\frac{n}{n_l}\prod_{\substack{J \subset I \\ l \not\in J \\ J \neq \{i,j\}}}x_J,$$
where $\{k,l\} = I \backslash \{i,j\}$. Hence, once we fix the parameters $z_{ij}$, $x_{kl}$ and those in the set
$$\big\{x_J: J \subset I, k\not\in J, J\neq \{i,j\}\big\} \cup \big\{x_J: J \subset I, l\not\in J, J\neq \{i,j\}\big\},$$
we have at most of order $n^{\varepsilon}$ choices for all the remaining relative greatest common divisors with the exception of $x_{ij}$.

\item Let $\{i,j\}$ and $\{k,l\}$ be a partition of $I$. Again, just using the definition of $z_{ij}$ and $z_{kl}$, we derive from~\eqref{eq:initial4UFeqmult}:
$$m\prod_{\substack{J \subset I \\ J \not\in \{\{i,j\},\{k,l\}\}}}x_J=d_{ij}z_{ij}+d_{kl}z_{kl}.$$
Thus, once we fix the parameter $z_{ij}$ and $z_{kl}$ we have at most of order $n^{\varepsilon}$ choices for all relative greatest common divisors except $x_{ij}$ and $x_{kl}$. We have $3$ equations of this type.

\item Finally let $J_1,J_2 \subset I$ with $J_1 \neq J_2$, $|J_1|=|J_2|=3$ and $J_1 \cap J_2 =\{i,j\}$, $\{k,l\} = I \backslash \{i,j\}$. Then by multiplying $z_{J_1}$ and $z_{J_2}$ we get $6$ equations of the form
$$z_{J_1}z_{J_2}=\frac{n^2}{n_kn_ld_{J_1}d_{J_2}}+\frac{n^2d_{\{i,j\}}}{d_{J_1}d_{J_2}}z_{\{i,j\}}x_{\{k,l\}}^2x_{\{i,k,l\}}x_{\{j,k,l\}}x_{1234}.$$
Thus, if we fix $z_{\{i,j\}}$, $x_{\{k,l\}}$, $x_{\{i,k,l\}}$, $x_{\{j,k,l\}}$ and $x_{1234}$, we have at most of order $n^{\varepsilon}$ choices for the parameters $z_{J_1}$ and $z_{J_2}$. 
\end{enumerate}
Next we need to multiplicatively combine inequalities of type (\ref{eq:x23ineq}-\ref{eq:x234ineq}) in such a way, that the exponent of $n$ on the (larger) right hand side is small and the set of relative greatest common divisors making up for the (smaller) left hand side may be split into many defining sets. In addition to inequalities (\ref{eq:x23ineq}-\ref{eq:x234ineq}) in our computer search we took into account the following seven inequalities:

\begin{alignat*}{3}
z_{12}x_{12}&\leq \frac{2n}{n_1d_{12}} x_{23}x_{24}x_{234} \qquad z_{123}x_{12}x_{13}x_{123}, &&\leq  \frac{3n}{n_1d_{123}}x_{24}x_{34}x_{234}, \\
z_{13}x_{13} &\leq \frac{2n}{n_1d_{13}}x_{23}x_{34}x_{234} \qquad z_{124}x_{12}x_{14}x_{124},&&\leq \frac{3n}{n_1d_{124}}x_{23}x_{34}x_{234}, \\
z_{14}x_{14} &\leq \frac{2n}{n_1d_{14}}x_{23}x_{34}x_{234} \qquad z_{134}x_{13}x_{14}x_{134},&&\leq \frac{3n}{n_1d_{134}}x_{23}x_{24}x_{234}, \\
z_{24}x_{24} &\leq \frac{2n}{n_2d_{24}}x_{14}x_{34}x_{134}.
\end{alignat*}

After multiplying any number of such inequalities up, we divide by the product of all relative greatest common divisors on the right hand side. To clear the resulting denominator on the new left hand side we use inequality~\eqref{eq:t1ineq} together with the inequalities $t_2\leq \frac{12n^2}{n_2m}$ and $t_3\leq \frac{96n^4}{n_3m^2}$, which we derived in the proof of Lemma~\ref{lem:definingSetLemma}. Note that apart from clearing denominators we can add any number of these three inequalities to our previously selected ones.

Furthermore, we took into account that $n_in_jd_{ij}\geq n$ and $n_in_jn_kd_{ijk} \geq n$ for all $1\leq i,j,k\leq 4$. This may lead to a further reduction in size in terms of $n$ on the right hand side of inequalities constructed as above. However, we cannot prove that our computer search covered all possible defining sets and all relevant combinations of inequalities. Hence, it may well be that the exponent in Theorem~\ref{thm:4UFThm} can be improved by conducting a more complete search.

\ \\
{ \emph{Acknowledgements.}}
The authors would like to thank the referee for comments on the
  manuscript and
  acknowledge the support of the 
Austrian Science Fund (FWF): W1230, I 4945-N and I 4406-N.

\begin{bibdiv}
\begin{biblist}

  \bib{Aho-Sloane}{article}{
  Title                    = { Some doubly exponential sequences,},
  Author                   = {Aho, A.V.},
  Author                   = {Sloane, N.J.A},
  Journal                  = {Fibonacci Quart},
  Year                     = {1973},
  Pages                    = {429--437},
  Volume                   = {11},
}
\bib{BrowningElsholtz}{article}{
  Title                    = {The number of representations of rationals as a sum of unit fractions},
  Author                   = {Browning, T. D.},
  Author                   = {Elsholtz, C.},
  Journal                  = {Illinois J. Math.},
  Year                     = {2011},
  Number                   = {2},
  Pages                    = {685--696},
  Volume                   = {55},
}

\bib{Croot}{article}{
  Author                   = {Croot, E.S.},
  Title                    = {On a coloring conjecture about unit fractions},
  Journal                  = {Annals of Mathematics},
  Volume                   = {157},
  Year                     = {2003},
  Pages                    = {545--556},
}

\bib{Elsholtz}{article}{
  Title                    = {Sums of {$k$} unit fractions},
  Author                   = {Elsholtz, C.},
  Journal                  = {Trans. Amer. Math. Soc.},
  Year                     = {2001},
  Number                   = {8},
  Pages                    = {3209--3227},
  Volume                   = {353}
}

\bib{ElsholtzPlanitzer}{article}{
  Title                    = {The number of solutions of the Erd\H{o}s-Straus Equation and sums of $k$ unit fractions},
  Author                   = {Elsholtz, C.},
  Author                   = {Planitzer, S.},
  Journal                  = {Proc. Roy. Soc. Edinburgh Sect. A},
  Year                     = {2020}
  Number                   = {3},
  Pages                    = {1401-1427},
  Volume                   = {150}
}

\bib{ElsholtzTao}{article}{
  Title                    = {Counting the number of solutions to the {E}rd{\H o}s-{S}traus equation on unit fractions},
  Author                   = {Elsholtz, C.},
  Author                   = {Tao, T.},
  Journal                  = {J. Aust. Math. Soc.},
  Year                     = {2013},
  Number                   = {1},
  Pages                    = {50--105},
  Volume                   = {94}
}

\bib{Erdoes}{article}{
  Title                    = {Az $\frac{1}{x_1} + \frac{1}{x_2} +\cdots + \frac{1}{x_n} = \frac{a}{b}$ egyenlet eg{\'e}sz sz{\'a}m{\'u} megold{\'a}sair{\'o}l ({O}n a {D}iophantine equation, in {H}ungarian)},
  Author                   = {Erd{\H{o}}s, P.},
  Journal                  = {Mat. Lapok},
  Year                     = {1950},
  Pages                    = {192--210},
  Volume                   = {1}
}

\bib{Graham}{article}{
  Author                   = {Graham, R. L.},
  Title                    = {On finite sums of unit fractions},
  Journal                  = {Proc. London Math. Soc. (3)},
  Volume                   = {14},
  Year                     = {1964},
  Pages                    = {193--207},
}

\bib{Graham-Knuth-Patashnik}{book}{
  author = {R.L. Graham},
author = {D.E. Knuth and O. Patashnik},
title = {Concrete mathematics},
publisher = {Addison-Wesley},
address = {Reading, MA},
year = {1994},
}

\bib{HardyWright}{book}{
  Title                    = {An Introduction to the Theory of Numbers},
  Author                   = {Hardy, G. H.},
  Author                   = {Wright, E. M.},
  Edition                  = {sixth edition},
  Publisher                = {Oxford University Press},
  Year                     = {2008},
  Address                  = {Oxford}
}

\bib{DoubleExponential}{article}{
  Title                    = {Double exponential lower bound for the number of r
epresentations of unity by {E}gyptian fractions},
  Author                   = {Konyagin, S. V.},
  Note                     = {Translation of Mat. Zametki {{\bf{95}}} (2014), no
. 2, 312--316},
  Journal                  = {Math. Notes},
  Volume                   = {95},
  Year                     = {2014},
  Number                   = {1-2},
  Pages                    = {277--281},
}

\bib{DenseEgyptian}{article}{
  Author                   = {Martin, G.},
  Title                    = {Dense Egyptian fractions},
  Journal                  = {Trans. Amer. Math. Soc.},
  Volume                   = {351},
  Year                     = {1999},
  Number                   = {9},
  Pages                    = {3641--3657},
}

\bib{Wagner-Ziegler}{article}{
  Author                   = {Wagner, S.},
   Author                  = {Ziegler, V.},
  Title                    = {Irrationality of growth constants associated with polynomial recursions },
  Journal                  = {arxiv 2004.09353},
  Volume                   = {},
  Year                     = {},
  Pages                    = {},
}

\end{biblist}
\end{bibdiv}

\end{document}